\newtheorem{theorem}{Theorem}[section]
\newtheorem{lemma}[theorem]{Lemma}
\newtheorem{proposition}[theorem]{Proposition}
\newtheorem{corollary}[theorem]{Corollary}
\theoremstyle{definition}
\theoremstyle{definitions}
\newtheorem{definition}[theorem]{Definition}
\newtheorem{remark}[theorem]{Remark}
\newtheorem{example}[theorem]{Example}
\theoremstyle{notations}
\theoremstyle{remarks}
\newcommand{\N}{\mathbb{N}}
\newcommand{\sub}{\subseteq}
\newcommand{\ov}{\overline}
\newcommand{\lo}{\longrightarrow}
\newcommand{\wt}{\widetilde}
\newcommand{\vf}{\varphi}
\newcommand{\fr}{\frac}
\newcommand{\al}{\alpha}
\newcommand{\la}{\lambda}
\newcommand{\bt}{\beta}
\newcommand{\ti}{\tilde}
\newcommand{\tx}{\textit}
\newcommand{\psg}{\pi_1^{sg}(X,x)}
\newcommand{\psp}{\pi_1^{sp}(X,x)}
\newcommand{\pbs}{\pi_1^{bsp}(X,x)}
\newcommand{\pt}{\pi_1^{qtop}(X,x)}
\newcommand{\pc}{p:\wt{X}\lo X}
\newcommand{\pst}{p_*\pi_1(\wt{X},\ti{x})}
\newcommand{\V}{\mathcal{V}}
\newcommand{\U}{\mathcal{U}}
\journal{ }
\begin{document}

\begin{frontmatter}



\title{Spanier spaces and covering theory of non-homotopically path Hausdorff spaces}


\author[]{Ali~Pakdaman}
\ead{Alipaky@yahoo.com}
\author[]{Hamid~Torabi}
\ead{hamid$_{-}$torabi86@yahoo.com}
\author[]{Behrooz~Mashayekhy\corref{cor1}}
\ead{bmashf@um.ac.ir}
\address{Department of Pure Mathematics, Center of Excellence in Analysis on Algebraic Structures, Ferdowsi University of Mashhad,\\
P.O.Box 1159-91775, Mashhad, Iran.}
\cortext[cor1]{Corresponding author}
\begin{abstract}
H. Fischer et al. (Topology and its Application, 158 (2011) 397-408.) introduced the Spanier group of a based space $(X,x)$ which is denoted by $\psp$.
By a Spanier space we mean a space $X$ such that $\psp=\pi_1(X,x)$, for every $x\in X$. In this paper, first we give an example of Spanier spaces. Then we study the influence of the Spanier group on covering theory and introduce Spanier coverings which are universal coverings in the categorical sense. Second, we give a necessary and sufficient condition for the existence of Spanier
coverings for non-homotopically path Hausdorff spaces. Finally, we study the topological properties of Spanier groups and find out a criteria for the Hausdorffness of topological fundamental groups.

\end{abstract}

\begin{keyword}
Covering space\sep Spanier group\sep Spanier space\sep Homotopically path Hausdorffness \sep Small loop homotopically Hausdorffness \sep Shape injectivity.
\MSC[2010]{57M10, 57M05, 55Q05, 57M12}

\end{keyword}

\end{frontmatter}


\section{Introduction and motivation}
 A continuous map $p:\wt{X}\lo X$ is a $\textit {covering}$ of $X$, and $\wt{X}$ is called a $\textit {covering space}$ of $X$, if for every $x\in X$ there exists an open
subset $U$ of $X$ with $x\in U$ such that $U$ is $\tx{evenly covered}$ by $p$, that is,
$p^{-1}(U)$ is a disjoint union of open subsets of $\wt{X}$ each of which is mapped
homeomorphically onto $U$ by $p$.
For a connected, locally path connected and semi-locally simply connected space $X$, it is well known that there
is a 1-1 correspondence between its connected covering spaces and open subgroups of
its fundamental group $\pi_1(X,x)$, for a point $x\in X$ \cite{S}. E. H. Spanier \cite{S} classified connected covering spaces of the space $X$ using some subgroups of the fundamental group of $X$, recently named Spanier groups. If $\U$ is an open cover of $X$, the subgroup of $\pi_1(X, x)$ consisting of the homotopy classes of loops that can be represented by a
product of the following type: $$\prod\limits_{j=1}^{n}u_jv_ju_j^{-1},$$
where the $u_j$'s are arbitrary paths starting at the base point $x$ and each $v_j$ is a loop inside one of the neighborhoods $U_j\in\mathcal{U}$.
This group is called the unbased \emph{Spanier group with respect to $\U$}, denoted by $\pi(\U,x)$ \cite{S, R}. The following theorem is an interesting result on the above notion.

\begin{theorem}(\cite{S})
For a connected, locally path connected space $X$, if $H$ is a subgroup of $\pi_1(X,x)$ for $x\in X$ and there exists an open cover $\U$ of $X$ such that $\pi(\U,x)\leq H$, then there exists a covering $\pc$ such that $p_*\pi_1(\wt{X},\ti{x})=H$.
\end{theorem}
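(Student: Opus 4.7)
The plan is to construct $\wt{X}$ explicitly as a quotient of the path space, following the standard template but using $\pi(\U,x)\leq H$ to make things work. First I would assume (using local path connectedness) that every $U\in\U$ is path connected; if not, replace $\U$ by the cover of path components of its members, which only shrinks $\pi(\U,x)$. Then I would set
\[
\wt{X}=\{\,[\al]_H : \al \text{ a path in } X \text{ with } \al(0)=x\,\},
\]
where $[\al]_H=[\bt]_H$ iff $\al(1)=\bt(1)$ and $[\al\cdot\bt^{-1}]\in H$. Define $p:\wt{X}\lo X$ by $p([\al]_H)=\al(1)$, and take $\ti{x}=[c_x]_H$ where $c_x$ is the constant path.

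Next I would topologize $\wt{X}$ by declaring a basis of open sets of the form
\[
([\al]_H,U)=\{\,[\al\cdot\gamma]_H : \gamma \text{ a path in } U,\ \gamma(0)=\al(1)\,\},
\]
for $\al$ a path at $x$ and $U\in\U$ containing $\al(1)$. The verification that this is a basis and that $p$ is continuous is routine. The heart of the argument is showing that each $U\in\U$ is evenly covered. For this I would show: if $[\bt]_H\in([\al]_H,U)$, then $([\bt]_H,U)=([\al]_H,U)$; and if $[\bt]_H\notin([\al]_H,U)$ but $p(\bt(1))\in U$, then $([\bt]_H,U)\cap([\al]_H,U)=\emptyset$. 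Injectivity of $p$ restricted to each sheet follows from the same reasoning. This is the main obstacle, and it is exactly here that the hypothesis $\pi(\U,x)\leq H$ is used: given two paths $\al\cdot\gamma_1$ and $\al\cdot\gamma_2$ with $\gamma_i$ in $U$ and the same endpoint, the loop $\gamma_1\cdot\gamma_2^{-1}$ lies in $U$, so $\al\cdot(\gamma_1\cdot\gamma_2^{-1})\cdot\al^{-1}$ represents a generator of $\pi(\U,x)$; hence it lies in $H$, forcing $[\al\cdot\gamma_1]_H=[\al\cdot\gamma_2]_H$. Without this hypothesis, the sheets above $U$ could fail to be homeomorphic to $U$.

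Finally I would verify $p_*\pi_1(\wt{X},\ti{x})=H$. Path connectedness of $\wt{X}$ follows by lifting: for any $[\al]_H\in\wt{X}$, the assignment $t\mapsto[\al_t]_H$ (with $\al_t$ the restriction-reparametrization of $\al$ on $[0,t]$) gives a path from $\ti{x}$ to $[\al]_H$, continuity being checked on basic neighborhoods by local path connectedness. For the containment $p_*\pi_1(\wt{X},\ti{x})\sub H$, a loop $\ti\al$ in $\wt{X}$ at $\ti{x}$ projects to a loop $\al=p\circ\ti\al$, and by uniqueness of lifts $\ti\al(1)=[\al]_H$; since $\ti\al(1)=\ti{x}=[c_x]_H$, we get $[\al]\in H$. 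Conversely, given $[\al]\in H$, lift $\al$ to $\wt{X}$ starting at $\ti{x}$; the endpoint is $[\al]_H=[c_x]_H=\ti{x}$, so the lift is a loop, producing $[\al]\in p_*\pi_1(\wt{X},\ti{x})$. This completes the construction.
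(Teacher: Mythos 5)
Your construction is correct and is essentially the classical path-space construction that the paper is citing from Spanier's book (the paper gives no proof of its own), with the hypothesis $\pi(\U,x)\leq H$ entering exactly where you place it, namely to show that two paths $\al\cdot\gamma_1$, $\al\cdot\gamma_2$ ending at the same point of $U$ are $H$-equivalent. One minor point: the sets $([\al]_H,U)$ with $U$ restricted to members of $\U$ form only a subbasis in general; to make them an actual basis (and to render the even-covering and continuity checks genuinely routine) you should let $U$ range over all path-connected open subsets of members of $\U$, a collection that is closed under passage to smaller path-connected open sets by local path connectedness and still satisfies $\pi(\U,x)\leq H$ after this refinement.
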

Since for a locally path connected and semi-locally simply connected space $X$ there exists an open cover $\U$ such that $\pi(\U,x)=1$, for a point $x\in X$, the existence of simply connected universal covering follows from the above theorem.
But without locally path connectedness, these results fail since there exists a semi-locally simply connected space with nontrivial Spanier groups corresponding to every its open cover. H. Fischer, D. Repovs, Z.Virk, A. Zastrow \cite{R} proposed a modification of Spanier groups so that the corresponding results will be correct for all spaces. In order to do this, they instead of open sets U also considered ``pointed open sets", i.e. pairs $(U, x)$, where $x\in U$ and $U$ is open.
Let $\mathcal{U}=\{U_i | i\in I\}$ be a cover of X by open sets. For each $U_i\in\mathcal{U}$ take $|U_i|$ copies into $\V$ and define each of those copies as $(U_i,p)$, i.e. use the same set $U_i$ as first entry, and let the second entry run over all points
$p\in U_i$.
\begin{definition}(\cite{R}) Let $X$ be a space, $x\in X$, and $\V=\{(V_i,x_i)| i\in I\}$ be a cover of $X$ by open neighborhood pairs. Then
let $\pi^*(\V, x)$ be the subgroup of $\pi_1(X, x)$ which contains all homotopy classes having representatives of the following
type:$$\prod\limits_{j=1}^{n}u_jv_ju_j^{-1},$$
where the $u_j$'s are arbitrary paths that run from $x$ to some point $x_i$ and each $v_j$ then must be a closed path inside the
corresponding $V_i$. This group is called the based Spanier group with respect to $\mathcal{V}$, denoted by $\pi^*(\V,x)$.
\end{definition}
Let $\mathcal{U}$ be a refinement of an open covering $\mathcal{V}$. Then $\pi(\U)\leq\pi(\V)$ and $\pi^*(\U)\leq\pi^*(\V)$ if $\mathcal{U}$ and $\mathcal{V}$ are open coverings of pointed sets. By these inclusions, there exist inverse limits of these Spanier groups, defined via the directed system of all coverings with respect to refinement. H. Fisher et al. \cite{R} called them the unbased Spanier group and the based Spanier group of the space $X$ which we denote them by $\psp$ and $\pbs$, respectively. They also mentioned that these inverse limits are realized by intersections as follows:
 $$\psp=\bigcap\limits_{open\ covers\ \U}\pi(\U,x),$$ $$\pbs=\bigcap\limits_{open\ covers\ \V\ by\ pointed\ sets}\pi^*(\V,x).$$

For the spaces that are not locally nice, classification of covering spaces is not as pleasant. H. Fischer and A. Zastrow in \cite{FZ} defined a generalized regular covering which enjoys
most of the usual properties of classical coverings, with the possible exception of
evenly covered neighborhoodness. If $X$ is connected, locally path-connected and semi-locally simply connected,
then the generalized universal covering $p:\wt{X}\lo X$ agrees with the classical universal covering.
While semi-local simple connectivity is a crucial condition in classical covering space theory, the generalized covering
space theory mainly considered the condition called ``homotopically Hausdorf''. A space $X$ is $\textit{homotopically Hausdorff}$ if given any point $x$ in $X$ and any nontrivial homotopy class
$[\al]\in\pi_1(X,x)$, then there is a neighborhood $U$ of $x$ which contains no representative for $[\al]$.

H. Fischer et al. \cite{R} proved that triviality of based Spanier group implies the existence of generalized universal covering. In fact, they proved that for a space $X$, if $\pbs=1$, then $X$ is homotopically path Hausdorff which implies the existence of generalized universal covering.

Although all homotopically path Hausdorff spaces are homotopically Hausdorff, but there exist non-homotopically path Hausdorff spaces which are homotopically Hausdorff \cite[Proposition 3.4]{R}. Also, the authors \cite{P2, T2} studied the covering theory of non-homotopically Hausdorff spaces. Accordingly, we would like to study coverings
of non-homotopically path Hausdorff spaces and investigate the topology type of their fundamental group and their
universal covering spaces.

In Section 2, we introduce Spanier spaces and based Spanier spaces which are the spaces that their fundamental group is equal to their Spanier groups and based Spanier groups, respectively. By an example we show that these notions are different, although for locally path connected spaces they are the same since for locally path connected spaces we have $\pbs=\psp$ \cite{R}. Also, we show that small generated spaces in the sense of \cite{T2} are based Spanier spaces and hence Spanier spaces but the converse is not true in general.

In Section 3, we prove that for every covering $p:\wt{X}\lo X$ of a space $X$, $p_*\pi_1(\wt{X},\ti{x})$ contains $\pi_1^{sp}(X,x)$ as a subgroup and so $X$ has no simply connected covering space if it is non-homotopically path Hausdorff. Then, we introduce Spanier coverings that are universal coverings in the categorical sense. Also, in this case $\pi_1^{sp}(X,x)=p_*\pi_1(\wt{X},\ti{x})$.

In Section 4, we present the main result of this article which states that a connected and locally path connected space $X$ has a Spanier covering if and only if $X$ is a semi-locally Spanier space, that is, for every $x\in X$ there exists an open neighborhood $U$ such that the homotopy class of every loop in $U$ belongs to $\psp$. Also, we prove that for a connected and locally path connected space $X$ which admits a Spanier covering, for every subgroup $H$ of $\pi_1(X,x)$ which contains $\psp$ there exists a covering $p:\wt{X}_H\lo X$ such that $p_*\pi_1(\wt{X}_H,\ti{x})=H$, where $\ti{x}\in p^{-1}(\{x\})$.

Finally in Section 5, we study the topological properties of the Spanier group in topological fundamental groups. We prove that in connected and locally path connected spaces, $\psp$ is a closed subgroup and hence if $\pi_1^{\tau}(X,x)$ has indiscrete topology, then $X$ is a Spanier space. Using this fact, we show that the topological fundamental group of a connected and locally path connected space $X$ with trivial Spanier group is $T_1$ and also is Hausdorff if $X$ is paracompact.

Throughout this article, all the homotopies between two paths are relative to end points,
$X$ is a path connected space with the base point $x\in X$, and $p:\wt{X}\lo X$ is a path connected covering of $X$ with $\ti{x}\in p^{-1}(\{x\})$ as the base point of $\wt{X}$. Also, by the Spanier group we mean the unbased ones and since in locally path connected spaces based and unbased cases are coincide, we remove prefix unbased.

\section{Spanier spaces}
\begin{definition}
 We call a topological space $X$ the unbased Spanier space if $\pi_1(X,x)=\pi_1^{sp}(X,x)$, and the based Spanier space if $\pi_1(X,x)=\pi_1^{bsp}(X,x)$, for an arbitrary point $x\in X$.
\end{definition}
Since for locally path connected spaces the based and unbased
Spanier groups coincide \cite{R}, we use the terminology the Spanier space but for general spaces we omit the prefix unbased for brevity.
If a space $X$ is not homotopically Hausdorff, then there exists $x\in X$ and a nontrivial loop in $X$ based at $x$ which is homotopic to a loop in every neighborhood $U$ of $x$. Z. Virk \cite{V} called these loops as \emph{small loops} and showed that for every $x\in X$ they form a subgroup of $\pi_1(X,x)$ which is named \emph{small loop group} and denoted by $\pi_1^s(X,x)$. In general, these loops make the \emph{SG (small generated) subgroup}, denoted by $\psg$, which is the subgroup generated by the following set $$\{[\al*\bt*\al^{-1}]\ |\ [\bt]\in\pi_1^s(X,\al(1)),\ \al\in P(X,x)\},$$
where $P(X,x)$ is the set of all paths from $I$ into $X$ with initial point $x$ \cite{V}.
Also, Virk \cite{V} introduced \emph{small loop spaces} which are spaces with all loops as small loops and the authors \cite{T2} introduced \emph{small generated spaces} which their SG subgroup is their fundamental group. The following proposition easily comes from the definitions.
\begin{proposition}
For a topological space X and every $x\in X$, $\pi_1^s(X,x)\leq\psg\leq\pi_1^{bsp}(X,x)\leq\pi_1^{sp}(X,x)$.
\end{proposition}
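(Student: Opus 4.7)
The plan is to prove each of the three inclusions in turn. The outer two are essentially formal consequences of the definitions; the central inclusion uses the smallness condition to trap a loop inside a pointed open set of the given cover.

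For the first inclusion $\pi_1^s(X,x) \leq \psg$, any $[\bt] \in \pi_1^s(X,x)$ is of the form $[c_x * \bt * c_x^{-1}]$ with $c_x$ the constant path at $x$, which is by definition a generator of $\psg$ (take $\al = c_x$ in the generating set).

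For the third inclusion $\pbs \leq \psp$, I take an arbitrary open cover $\U = \{U_i\}$ of $X$ and promote it to a pointed cover $\V = \{(U_i, x_i)\}$ by choosing some $x_i \in U_i$ for each $i$. Every representative $\prod u_j v_j u_j^{-1}$ that witnesses membership in $\pi^*(\V,x)$ (with $u_j$ going from $x$ to a prescribed $x_i$ and $v_j$ a loop at $x_i$ inside $U_i$) is a fortiori a representative of the more permissive form used for $\pi(\U,x)$. Hence $\pi^*(\V,x) \leq \pi(\U,x)$, so $\pbs \leq \pi^*(\V,x) \leq \pi(\U,x)$, and intersecting over all $\U$ yields $\pbs \leq \psp$.

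For the central inclusion $\psg \leq \pbs$, I take a generator $[\al * \bt * \al^{-1}]$ of $\psg$ with $\bt$ small at $y = \al(1)$ and any pointed cover $\V$, and exhibit a representative of the prescribed form for $\pi^*(\V,x)$. Select $(V_{i_0}, x_{i_0}) \in \V$ with $y \in V_{i_0}$; applying smallness of $\bt$ at $y$ to the neighborhood $V_{i_0}$ yields a loop $\bt'$ at $y$ supported in $V_{i_0}$ with $[\bt] = [\bt']$ in $\pi_1(X, y)$. Picking a path $\delta$ in $V_{i_0}$ from $x_{i_0}$ to $y$, one rewrites $[\al * \bt * \al^{-1}] = [(\al * \delta^{-1}) * (\delta * \bt' * \delta^{-1}) * (\al * \delta^{-1})^{-1}]$, which has the required form for $\pi^*(\V,x)$ since $u = \al * \delta^{-1}$ runs from $x$ to $x_{i_0}$ and $v = \delta * \bt' * \delta^{-1}$ is a loop at $x_{i_0}$ inside $V_{i_0}$.

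The principal technical point is securing the path $\delta$: if $V_{i_0}$ fails to be path-connected from $x_{i_0}$ to $y$, I would first refine $\V$ to a pointed cover whose members are path-connected around their selected base points, which is available in the locally path-connected setting that prevails in the rest of the paper, and then rerun the same argument.
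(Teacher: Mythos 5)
Your first and third inclusions are fine: $\pi_1^s(X,x)\leq\psg$ follows by taking $\al$ to be the constant path at $x$, and $\pbs\leq\psp$ follows because every representative of the based form is a fortiori one of the unbased form, so $\pbs\leq\pi^*(\V,x)\leq\pi(\U,x)$ for a pointed cover $\V$ attached to any open cover $\U$. (The paper itself offers no argument here --- it states only that the proposition ``easily comes from the definitions'' --- so the only benchmark is the definitions themselves.)

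The genuine problem is your middle inclusion $\psg\leq\pbs$. Your argument hinges on a connecting path $\delta$ inside $V_{i_0}$ from the designated base point $x_{i_0}$ to $y=\al(1)$, and your fallback --- refining $\V$ to a pointed cover with path-connected members --- invokes local path connectedness. But the proposition is stated for an arbitrary topological space, and the paper applies these groups to spaces that are explicitly not locally path connected (the space $Y$ of Section 2), so that patch is not available. The gap closes once the paper's convention on pointed covers is used: in the construction immediately preceding Definition 1.2 (following Fischer et al.), a pointed cover is obtained from an open cover by taking, for each open set $U_i$, the pairs $(U_i,p)$ for \emph{every} point $p\in U_i$. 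Hence $(V_{i_0},y)$ itself belongs to $\V$, and since a small loop at $y$ is by definition homotopic (rel endpoints) to a loop in $V_{i_0}$ \emph{based at $y$}, you may simply take $u=\al$ and $v=\bt'$; no path $\delta$ and no connectivity hypothesis are needed. If instead one allowed arbitrary one-basepoint-per-set pointed covers, a cover whose only member containing $y$ has its base point in a different path component of that member would genuinely block your argument, and the inclusion $\psg\leq\pbs$ would be in doubt; the all-basepoints convention is precisely what makes the statement true in full generality. The same convention also streamlines your third inclusion: use the canonical pointed cover associated to $\U$ rather than a one-point-per-set selection.
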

\begin{example}
Small loop spaces and small generated spaces are based Spanier spaces and therefore by the above proposition they are Spanier spaces.
\end{example}
The following example shows that every Spanier space is not necessarily a based Spanier space.
\begin{example}
Consider the space Y introduced in \cite[Fig. 1]{R} which is obtained by taking the "surface" obtained by rotating the topologist's sine curve about its
limiting arc and then adding a single arc C. This arc C can be easily embedded
into $\mathbb{R}^3$, so as not to intersect the surface portion or the central axis at any other points than its endpoints (see Fig. 1).
\begin{figure}
\center
 \includegraphics[scale=0.3]{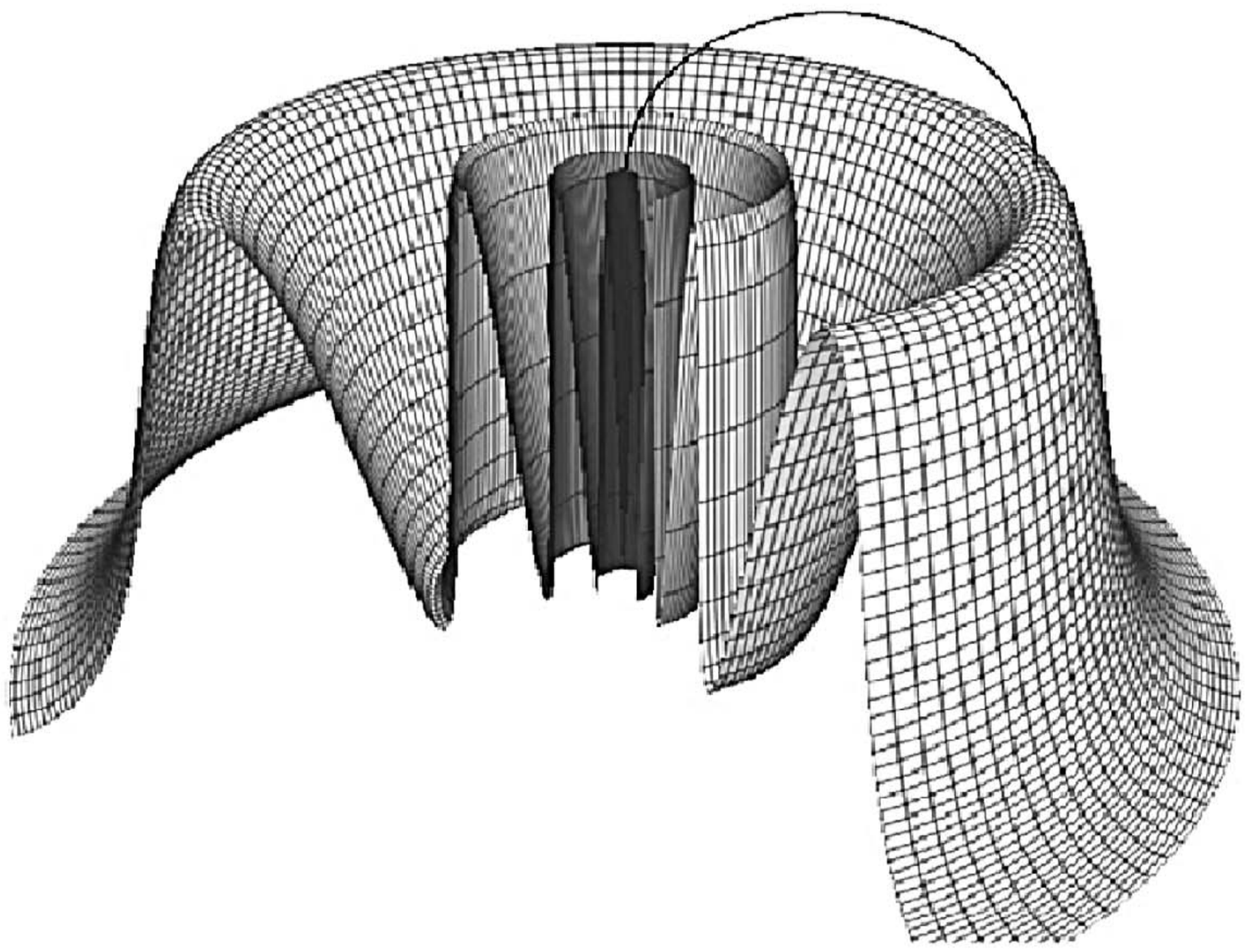}
  \caption{Space $Y$}\label{1}
\end{figure}
By \cite[Proposition 3.1]{R}, the space $Y$ is semi-locally simply connected which implies that $\pi_1^{sg}(Y,x_0)=1$. Fix a point $x_0$ on the surface portion of $Y$. Let $\rho_r$ denote a simple path on the surface starting
at $x_0$, contained in the plane determined by $x_0$ and the central axis, with endpoint at distance r from the central axis.
 Let $\al_r$ be the simple loop with radius $0<r<1$ on the surface. Obviously $\al_r$ is not null homotopic and any
neighborhood of a point of the central axis contains such a loop. For every $0<r<1$ the loops $\rho*\al_r*\rho^{-1}$ (with $\al_r$ appropriately
based) are homotopic to each other and non-trivial and hence $\pi_1^{sp}(Y,x_0)=\pi_1(Y,x_0)$. Note that the space $Y$ is not locally path connected and $\pi_1^{bsp}(Y,x_0)=1$ and hence the equality $\pi_1^{sp}(Y,x_0)=\pi_1^{bsp}(Y,x_0)$ does not hold in general.
\end{example}
\begin{remark}
 Note that since $\pi_1^{sg}(Y)=1$, the space $Y$ is an example of Spanier spaces which are not small generated space.
 \end{remark}
\section{Spanier coverings}
The importance of Spanier groups was pointed out by Conner, Meilstrup, Repovs, Zastrow and Zeljko \cite{CR} and by Fischer, Repoves, Virk and Zastrow \cite{R}. In this section, we study some basic properties of Spanier groups and their relations to the covering spaces.
By convention, the term \emph{universal covering} will always mean a categorical universal object, that is, a covering
$p:\wt{X}\lo X$ with the property that for every covering $q:\wt{Y}\lo X$ with a path connected space $\wt{Y}$ there exists a covering $r:\wt{X}\lo\wt{Y}$ such that $q\circ r= p$. Also, we denote by $\mathcal{COV}$(X) the category of all coverings of $X$ as objects and covering maps between them as morphisms.

\begin{theorem}
For every covering $p:\wt{X}\lo X$ and $x\in X$ the following relations hold: $$\pbs\leq\psp\leq\pst.$$
\end{theorem}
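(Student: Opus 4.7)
The first inequality $\pi_1^{bsp}(X,x)\leq\pi_1^{sp}(X,x)$ is exactly part of Proposition 2.2, which was already established, so no work is required there. The substantive content of the statement is the inclusion $\pi_1^{sp}(X,x)\leq p_*\pi_1(\wt{X},\ti{x})$, and the plan is to exhibit a single open cover $\mathcal{U}$ of $X$ whose unbased Spanier group $\pi(\mathcal{U},x)$ already lies inside $p_*\pi_1(\wt{X},\ti{x})$; since $\pi_1^{sp}(X,x)$ is the intersection of all such $\pi(\mathcal{U},x)$, the desired inequality follows immediately.

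The natural cover to choose is the one produced by the covering map itself. For each $y\in X$ pick an evenly covered open neighborhood $U_y$, and set $\mathcal{U}=\{U_y\mid y\in X\}$. The key step is the standard lifting observation: if $v$ is a loop contained entirely in some $U_j\in\mathcal{U}$, say based at $y\in U_j$, and if $\ti{y}\in p^{-1}(y)$, then because $\ti{y}$ lies in one of the disjoint sheets $\wt{U}_j$ that is mapped homeomorphically to $U_j$ by $p$, the lift of $v$ starting at $\ti{y}$ stays inside $\wt{U}_j$ and necessarily closes up at $\ti{y}$. Consequently every $u_jv_ju_j^{-1}$ (with $u_j$ an arbitrary path from $x$ and $v_j$ a loop inside some $U_j\in\mathcal{U}$) lifts to a loop at $\ti{x}$: first lift $u_j$ from $\ti{x}$ to a path ending at some $\ti{y}_j$, then use the lift of $v_j$ at $\ti{y}_j$ (a loop by the remark above), and finally the reverse lift of $u_j$ to return to $\ti{x}$.

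Concatenating finitely many such loops shows that any representative $\prod_{j=1}^n u_jv_ju_j^{-1}$ of an element of $\pi(\mathcal{U},x)$ lifts to a loop at $\ti{x}$, so its class lies in $p_*\pi_1(\wt{X},\ti{x})$. Hence $\pi(\mathcal{U},x)\subseteq p_*\pi_1(\wt{X},\ti{x})$, and then
\[
\pi_1^{sp}(X,x)=\bigcap_{\text{open covers }\mathcal{V}}\pi(\mathcal{V},x)\;\subseteq\;\pi(\mathcal{U},x)\;\subseteq\;p_*\pi_1(\wt{X},\ti{x}),
\]
completing the proof.

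There is no real obstacle here; the only point that requires mild care is making sure that in the chain of lifts the base path $u_j$ is lifted starting at $\ti{x}$ (rather than at some other preimage of $x$), so that the concatenated lift is actually based at $\ti{x}$. This is routine for the unique path lifting property of coverings, and it is exactly the reason the argument works uniformly for every $u_j$ appearing in a Spanier-type product.
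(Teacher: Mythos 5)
Your proof is correct and follows essentially the same route as the paper's: both reduce the statement to showing $\pi(\mathcal{U},x)\leq p_*\pi_1(\wt{X},\ti{x})$ for the cover $\mathcal{U}$ by evenly covered open sets, and both verify this by lifting each generator $u_jv_ju_j^{-1}$ piecewise (lift of $u_j$ from $\ti{x}$, lift of $v_j$ through the sheet homeomorphism, reverse lift of $u_j$) and concatenating to obtain a closed lift. The first inequality is, as you say, just Proposition 2.2, which the paper also leaves implicit.
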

\begin{proof}
Let $\U$ be a cover of $X$ by evenly covered open subsets. It suffices to show that $\pi(\U,x)\leq\pst$. For this, let $[\al]\in\pi(\U,x)$. Then there are open subsets $U_i\in\U$, paths $\al_i$ from $x$ to $x_i\in U_i$ and loops $\bt_i:I\lo U_i$ based at $x_i$, for $i=1,2,...,n$, such that
$$\al\simeq (\al_1*\bt_1*\al_1^{-1})*(\al_2*\bt_2*\al_2^{-1})*...*(\al_n*\bt_n*\al_n^{-1}).$$
Let $\wt{\al_i}$ be the lift of $\al$ with initial point $\ti{x}$, $\wt{\al_i^{-1}}$ be the lift of $\al_i^{-1}$ with initial point $\wt{\al_i}(1)$ and $\wt{\bt_i}=(p|_{V_i})^{-1}\circ\bt_i$ where $V_i$ is the homeomorphic copy of $U_i$ containing $\ti{x_i}=\wt{\al_i}(1)$. Now define $$\wt{\al}=(\wt{\al_1}*\wt{\bt_1}*\wt{\al_1^{-1}})*(\wt{\al_2}*\wt{\bt_2}*\wt{\al_2^{-1}})*...*(\wt{\al_n}*\wt{\bt_n}*\wt{\al_n^{-1}})$$
which is a loop in $\wt{X}$ and $p\circ\wt{\al}\simeq\al$ which implies that $[\al]\in\pst$.
\end{proof}
We know that the image subgroup $\pst$ in $\pi_1(X,x)$
consists of the homotopy classes of loops in $X$ based at $x$ whose lifts to $\wt{X}$ starting
at $\ti{x}$ are loops. Hence the following result holds.
\begin{corollary}
If $\pc$ is a covering and $[\al]\in\psp$, then every lift of $\al$ in $\wt{X}$ is a loop.
\end{corollary}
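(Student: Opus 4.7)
The plan is to derive this directly from Theorem 3.1 together with the normality of $\psp$ in $\pi_1(X,x)$. The theorem already gives $\psp\leq\pst$, and the paragraph preceding the corollary recalls that $\pst$ consists of exactly those homotopy classes of loops whose $\ti{x}$-based lift is a loop in $\wt{X}$. Hence the $\ti{x}$-started lift of any $\al$ with $[\al]\in\psp$ is automatically a loop; the only extra work is to promote this from the distinguished basepoint $\ti{x}$ to every preimage of $x$.

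First I would verify that each unbased Spanier group $\pi(\U,x)$ is normal in $\pi_1(X,x)$. Conjugating a typical generator $[u*v*u^{-1}]$ by any $[\gamma]\in\pi_1(X,x)$ yields $[(\gamma*u)*v*(\gamma*u)^{-1}]$, which is again a generator of the same form because $\gamma*u$ is still an arbitrary path starting at $x$ and $v$ is still a loop inside some member of $\U$. Taking the intersection over all open covers $\U$ then shows that $\psp$ is itself normal in $\pi_1(X,x)$.

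Second, I would invoke the standard change-of-basepoint fact for coverings: if $\ti{y}\in p^{-1}(\{x\})$ and $\ti{\gamma}$ is a path in $\wt{X}$ from $\ti{x}$ to $\ti{y}$, then $p_*\pi_1(\wt{X},\ti{y})=[p\circ\ti{\gamma}]^{-1}\,\pst\,[p\circ\ti{\gamma}]$, so the subgroups $p_*\pi_1(\wt{X},\ti{y})$ for $\ti{y}\in p^{-1}(\{x\})$ are precisely the conjugates of $\pst$ in $\pi_1(X,x)$. Normality of $\psp$ then forces $\psp\leq p_*\pi_1(\wt{X},\ti{y})$ for every such $\ti{y}$, and applying the ``lift is a loop'' characterization again shows that the lift of $\al$ starting at any preimage of $x$ is a loop.

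I do not expect any serious obstacle. The only subtle point is recognising that Theorem 3.1 by itself yields the conclusion only at the distinguished basepoint $\ti{x}$, and that this gap is closed precisely by the normality of $\psp$, which in turn is immediate from the explicit form of the generators of $\pi(\U,x)$.
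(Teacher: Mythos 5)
Your argument is correct, and it reaches the conclusion by a route that differs from the paper's in exactly the one place where the corollary says more than ``the lift at $\ti{x}$ is a loop.'' The paper simply reads the corollary off from Theorem 3.1 together with the characterization of $\pst$ as the classes of loops whose lift at $\ti{x}$ closes up; the passage from the distinguished basepoint to \emph{every} point of $p^{-1}(\{x\})$ is left implicit, the intended justification being that Theorem 3.1 holds for an arbitrary choice of $\ti{x}$ in the fiber (its proof nowhere uses which preimage is selected), so $\psp\leq p_*\pi_1(\wt{X},\ti{y})$ for every $\ti{y}\in p^{-1}(\{x\})$ directly. You instead fix $\ti{x}$, observe that each $\pi(\U,x)$ is normal in $\pi_1(X,x)$ because conjugating a generator $[u*v*u^{-1}]$ by $[\gamma]$ yields the generator $[(\gamma*u)*v*(\gamma*u)^{-1}]$ of the same form, conclude that $\psp$ is normal as an intersection of normal subgroups, and then use the standard fact that the image subgroups at the various points of the fiber of a path connected covering are mutually conjugate. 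Both arguments are sound (your conjugation step does use that $\wt{X}$ is path connected, which is a standing hypothesis of the paper). What your version buys is an explicit record of the normality of $\psp$, a fact worth having in its own right --- it is, for instance, what makes Corollary 3.5 (trivial action of $\psp$ on the fiber) and the regularity of Spanier coverings transparent; what the paper's version buys is brevity, at the cost of leaving the ``every lift'' quantifier unjustified on a literal reading of the stated Theorem 3.1.
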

\begin{corollary}
If the space $X$ is not homotopically path Hausdorff, then $X$ does not admit a simply connected covering space.
\end{corollary}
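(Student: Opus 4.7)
The plan is to argue by contrapositive: assuming that $X$ does admit a simply connected covering, I will show that $X$ must be homotopically path Hausdorff. The argument is short and chains together two results that are already available in the excerpt.

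First I would suppose there is a covering $p:\wt{X}\lo X$ with $\wt{X}$ simply connected, so that $p_*\pi_1(\wt{X},\ti{x})$ is the trivial subgroup of $\pi_1(X,x)$. Applying Theorem 3.1 to this covering gives $\pbs\leq\psp\leq p_*\pi_1(\wt{X},\ti{x})=1$, and hence both $\psp$ and $\pbs$ are trivial. (Alternatively, one can just use the weaker inclusion $\pbs\leq p_*\pi_1(\wt{X},\ti{x})$ that follows from Theorem 3.1 combined with Proposition 2.2.)

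Next I would invoke the result of H. Fischer et al.\ cited in the introduction, namely that the triviality of the based Spanier group $\pbs$ forces $X$ to be homotopically path Hausdorff. Combined with the previous step this shows that the existence of a simply connected covering implies the homotopically path Hausdorff property, and the corollary then follows by contraposition.

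I do not expect any genuine obstacle here: the essential content was already packaged into Theorem 3.1 and into the Fischer--Repov\v{s}--Virk--Zastrow implication $\pbs=1\Rightarrow X$ is homotopically path Hausdorff. The only small point to check is that lifting the base point is legitimate, but since $p$ is a path connected covering we can choose $\ti{x}\in p^{-1}(\{x\})$ as assumed in the blanket conventions at the end of Section~1.
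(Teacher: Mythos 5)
Your proof is correct and matches the paper's intended argument: the corollary is stated without proof precisely because it follows immediately from Theorem 3.1 (which gives $\pbs\leq\psp\leq p_*\pi_1(\wt{X},\ti{x})=1$ for a simply connected covering) together with the cited result of Fischer et al.\ that $\pbs=1$ implies $X$ is homotopically path Hausdorff. Nothing to add.
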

\begin{corollary}
Let $X$ be a connected, locally path connected and simply connected space. If the action of a group
$G$  on $X$ is properly discontinuous, then $\pi_1^{sp}(X/G)=1$ and therefore $X/G$ is homotopically path Hausdorff.
\end{corollary}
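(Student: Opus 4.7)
The plan is to use Theorem 3.1 together with the fact that, under the stated hypotheses, the quotient map $p:X\lo X/G$ is a (classical) covering map. First I would recall that a properly discontinuous action of $G$ on a Hausdorff (or more generally, on a reasonable) space $X$ yields a covering projection $p:X\lo X/G$, and that the deck transformation group is $G$ itself; this is the standard starting point.

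Next I would apply Theorem 3.1 to this covering: for any basepoint $\ti{x}\in X$ lying over $x\in X/G$ we have
\[
\pi_1^{sp}(X/G,x)\leq p_*\pi_1(X,\ti{x}).
\]
Since $X$ is assumed simply connected, $\pi_1(X,\ti{x})=1$, and hence $p_*\pi_1(X,\ti{x})=1$. Thus $\pi_1^{sp}(X/G,x)=1$. Because $X/G$ is path connected (as a continuous image of the path connected space $X$), the basepoint is immaterial.

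For the second assertion, I would first observe that $X/G$ is locally path connected: local path connectedness is preserved under quotient maps, and $p$ is a quotient since it is a covering. By the remark in the preamble (for locally path connected spaces $\pi_1^{bsp}=\pi_1^{sp}$, as noted from \cite{R}), we therefore also obtain $\pi_1^{bsp}(X/G,x)=1$. Then I would invoke the result of Fischer et al.\ cited in the introduction, namely that $\pi_1^{bsp}=1$ forces the space to be homotopically path Hausdorff, to conclude that $X/G$ is homotopically path Hausdorff.

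The argument is essentially a direct application of the earlier theorem; there is no real obstacle. The only small point one has to be careful about is ensuring that the quotient map really is a covering and that $X/G$ inherits local path connectedness, but both of these are routine facts that follow immediately from the hypothesis that the action is properly discontinuous and that $X$ is locally path connected.
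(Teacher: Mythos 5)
Your argument is correct and is exactly the one the paper intends (the corollary is stated without proof as a direct consequence of Theorem 3.1 applied to the covering $X\lo X/G$ arising from the properly discontinuous action, with $\pi_1(X)=1$). One minor simplification: for the second assertion you do not need local path connectedness of $X/G$, since Proposition 2.2 already gives $\pi_1^{bsp}(X/G)\leq\pi_1^{sp}(X/G)=1$ for an arbitrary space.
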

\begin{corollary}
For every covering $p:\wt{X}\lo X$, $\psp$ acts trivially on $p^{-1}(\{x\})$, that is, $\hat{x}.[\al]=\hat{x}$, for all $\hat{x}\in p^{-1}(\{x\})$ and $[\al]\in \psp$.
\end{corollary}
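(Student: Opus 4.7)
The plan is to reduce the statement directly to the preceding corollary, which asserts that every lift to $\wt{X}$ of a loop representing a class in $\psp$ is again a loop. Recall that the standard monodromy action of $\pi_1(X,x)$ on the fiber $p^{-1}(\{x\})$ is defined by $\hat{x}.[\al]=\wt{\al}(1)$, where $\wt{\al}$ denotes the unique lift to $\wt{X}$ of a chosen representative $\al$ with $\wt{\al}(0)=\hat{x}$; this is well-defined on homotopy classes by the usual monodromy lemma.

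Given $\hat{x}\in p^{-1}(\{x\})$ and $[\al]\in\psp$, I would pick any representative $\al$ and consider the lift $\wt{\al}$ starting at $\hat{x}$. By the preceding corollary (which refers to \emph{every} lift, not only the one starting at the distinguished point $\ti{x}$), $\wt{\al}$ is a loop, so $\wt{\al}(1)=\hat{x}$, and therefore $\hat{x}.[\al]=\hat{x}$. Since $\hat{x}$ was arbitrary in the fiber, the action of $\psp$ on $p^{-1}(\{x\})$ is trivial.

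The one point that deserves a short justification is why the preceding corollary is available at every fiber point, not only at the distinguished $\ti{x}$ used in Theorem~3.1. The bridge is normality of $\psp$ in $\pi_1(X,x)$: each $\pi(\U,x)$ is invariant under conjugation, because a generator $[u*v*u^{-1}]$ with $v$ a loop in some $U\in\U$ remains of the same form after conjugation by $[\gamma]\in\pi_1(X,x)$ (replace $u$ by $\gamma*u$). Intersecting over all open covers preserves normality, so $\psp$ is normal in $\pi_1(X,x)$. If $\wt{\gamma}$ is a path in $\wt{X}$ from $\ti{x}$ to $\hat{x}$ and $\gamma=p\circ\wt{\gamma}$, then $p_*\pi_1(\wt{X},\hat{x})=[\gamma]^{-1}\pst[\gamma]$, and normality yields
$$\psp=[\gamma]^{-1}\psp[\gamma]\subseteq[\gamma]^{-1}\pst[\gamma]=p_*\pi_1(\wt{X},\hat{x}),$$
so the lift starting at $\hat{x}$ is indeed a loop. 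No substantial obstacle arises; the statement is essentially a reformulation of the preceding loop-lifting corollary in the language of the monodromy action.
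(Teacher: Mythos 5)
Your proof is correct and is exactly the argument the paper intends: the corollary is stated without proof as an immediate consequence of the preceding one, via the identification $\hat{x}.[\al]=\wt{\al}(1)$ for the lift $\wt{\al}$ starting at $\hat{x}$. Your normality digression is a valid way to justify the ``every lift'' clause, though it can be shortcut by noting that the proof of Theorem~3.1 never uses anything special about $\ti{x}$ and so already gives $\psp\leq p_*\pi_1(\wt{X},\hat{x})$ for each $\hat{x}\in p^{-1}(\{x\})$.
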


\begin{theorem}
Every covering space of a (based or unbased) Spanier space $X$ is homeomorphic to $X$.
\end{theorem}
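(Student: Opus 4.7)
The plan is to apply Corollary 3.5 together with the standing convention that every covering is path connected in order to squeeze the monodromy action into being both transitive and trivial, forcing singleton fibers.

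First I would reduce the based and unbased hypotheses to a single one. By Proposition 2.2, $\pi_1^{bsp}(X,x)\leq \pi_1^{sp}(X,x)\leq \pi_1(X,x)$, so the based Spanier hypothesis $\pi_1^{bsp}(X,x)=\pi_1(X,x)$ immediately entails the unbased Spanier equality $\pi_1^{sp}(X,x)=\pi_1(X,x)$. In either case of the statement I may therefore assume that $\pi_1^{sp}(X,x)=\pi_1(X,x)$ for every $x\in X$.

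Next, for any covering $p:\wt{X}\lo X$, I would pick a base point $\ti{x}\in p^{-1}(\{x\})$ and invoke Corollary 3.5 to conclude that every element of $\pi_1^{sp}(X,x)$ fixes each point of the fiber $p^{-1}(\{x\})$. Under the Spanier hypothesis this means that the full fundamental group $\pi_1(X,x)$ acts trivially on $p^{-1}(\{x\})$ via monodromy. On the other hand, since $\wt{X}$ is path connected by the global convention, the monodromy action of $\pi_1(X,x)$ on $p^{-1}(\{x\})$ is transitive. An action that is simultaneously trivial and transitive on a nonempty set forces the set to be a singleton, so $|p^{-1}(\{x\})|=1$.

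Finally, I would propagate this to every fiber. Because $X$ and $\wt{X}$ are path connected, all fibers of the covering $p$ have the same cardinality, so $p$ is a bijection. Any bijective covering map is a bijective local homeomorphism, hence a homeomorphism, which is the desired conclusion. I do not anticipate any genuine obstacle here; the argument is essentially bookkeeping once Corollary 3.5 is in place. The only subtlety worth mentioning explicitly is the passage from the fiber over the distinguished base point to an arbitrary fiber, which uses path connectedness of both $X$ and $\wt{X}$.
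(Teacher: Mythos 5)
Your proof is correct and is essentially the paper's argument in a different costume: the paper sandwiches $\psp\leq \pst\leq\pi_1(X,x)=\psp$ using Theorem 3.1 and concludes that the covering is one-sheeted because the number of sheets equals the index of the image subgroup, while you reach the same singleton fiber by combining triviality of the $\psp$-action on the fiber (Corollary 3.5, itself an immediate consequence of Theorem 3.1) with transitivity of the monodromy action for a path connected covering. Your reduction of the based case to the unbased one via Proposition 2.2 and the final bookkeeping (constancy of fiber cardinality over a connected base, a bijective covering map being a homeomorphism) are all sound.
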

\begin{proof}
Let $p:\wt{X}\lo X$ be a covering of a Spanier space $X$. Then by Theorem 3.1 $\psp\leq p_*\pi_1(\wt{X},\tilde{x})\leq\pi_1(X,x)=\psp$, for each $x\in X$ which implies that $p:\wt{X}\lo X$ is a one sheeted covering of $X$. Hence  $\wt{X}$ is homeomorphic to $X$.
\end{proof}
\begin{definition}
By a $\tx{(based) Spanier covering}$ of a topological space $X$ we mean a covering $p:\wt{X}\lo X$ such that $\wt{X}$ is a (based) Spanier space.
\end{definition}
The following proposition comes from definitions and Theorem 3.1.
\begin{proposition}
For a space $X$ the following statements hold.\\
(i) Every based Spanier covering is a Spanier covering.\\
(ii) If $X$ has a based Spanier covering, then $\pbs=\psp$.
\end{proposition}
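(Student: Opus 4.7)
My plan is to handle the two parts separately, with (i) essentially immediate and (ii) reducing to a ``lift-and-project'' argument that inverts the one used to prove Theorem 3.1. For (i), I would read the conclusion directly off Proposition 2.2, which furnishes the sandwich $\pi_1^{bsp}(\wt X,\ti x)\leq\pi_1^{sp}(\wt X,\ti x)\leq\pi_1(\wt X,\ti x)$. If $\wt X$ is a based Spanier space the two extremes coincide, forcing $\pi_1^{sp}(\wt X,\ti x)=\pi_1(\wt X,\ti x)$, so $\wt X$ is in particular a Spanier space and $p$ is a Spanier covering.

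For (ii), the strategy is to trap every relevant group between $\pbs$ and itself. Theorem 3.1 already yields $\pbs\leq\psp\leq p_*\pi_1(\wt X,\ti x)$, and the based Spanier hypothesis on $\wt X$ rewrites the right-hand end as $p_*\pi_1^{bsp}(\wt X,\ti x)$. Hence it suffices to prove the single reverse inclusion $p_*\pi_1^{bsp}(\wt X,\ti x)\leq\pbs$; once that is in hand, all four groups collapse and $\psp=\pbs$ falls out as a corollary.

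To secure this inclusion I would work one pointed open cover at a time. Given an arbitrary pointed open cover $\mathcal W$ of $X$, refine it to a pointed open cover $\mathcal W'$ whose first coordinates are all evenly covered by $p$ (legitimate because evenly covered opens form a basis of the topology on the base, and refinements of pointed covers only tighten the associated Spanier group, so $\pi^*(\mathcal W',x)\leq\pi^*(\mathcal W,x)$). Lift $\mathcal W'$ to the pointed open cover $\wt{\mathcal W}$ of $\wt X$ whose elements are the pairs $(V,\wt y)$, with $V$ a sheet of $p^{-1}(W')$ and $\wt y$ the unique preimage of $y$ lying in $V$, as $(W',y)$ ranges over $\mathcal W'$. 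For $[\wt\al]\in\pi^*(\wt{\mathcal W},\ti x)$ written in the standard form $\prod u_jv_ju_j^{-1}$, with $v_j$ a loop in some sheet $V_j$ based at $\wt y_j$, the image $p\circ\wt\al\simeq\prod(p\circ u_j)(p\circ v_j)(p\circ u_j)^{-1}$ is a based representative for $\pi^*(\mathcal W',x)$, since $p\circ v_j$ is a loop in $W'_j=p(V_j)$ based at $y_j=p(\wt y_j)$. Intersecting over all $\mathcal W$ delivers the required inclusion.

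I expect the main obstacle to be the bookkeeping in that last paragraph: checking that the refinement $\mathcal W'$ genuinely covers $X$ (so that $\wt{\mathcal W}$ in turn covers $\wt X$), and that basepoints are matched correctly so that the projected word really is a \emph{based} representative with respect to $\mathcal W'$ rather than only an unbased one. Everything else is formal from the definitions and Theorem 3.1.
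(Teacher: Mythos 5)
Your proof is correct and follows the route the paper intends: part (i) is exactly the sandwich from Proposition 2.2, and part (ii) combines Theorem 3.1 with the reverse inclusion $p_*\pi_1^{bsp}(\wt{X},\ti{x})\leq\pbs$. The lift-refine-project argument you give for that inclusion is precisely the content of the paper's Lemma 3.9 (stated and proved immediately after this proposition), so you have in effect re-derived that lemma inline; the bookkeeping you flag (refining to evenly covered pointed sets and matching base points under $p$) goes through just as you describe.
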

\begin{lemma}
If $\pc$ is a covering and $[\al]$ belongs to $\pi_1^{bsp}(\wt{X},\ti{x})$ (or $\pi_1^{sp}(\wt{X},\ti{x}))$, then $[p\circ\al]$ belongs to $\pbs$ (or $\psp)$.
\end{lemma}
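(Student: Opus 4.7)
The plan is to lift an arbitrary (pointed) open cover of $X$ to a (pointed) open cover of $\widetilde{X}$ via the preimage map $p^{-1}$, decompose $\alpha$ as a Spanier product with respect to this lifted cover (using $[\alpha]\in\pi_1^{sp}(\widetilde{X},\tilde{x})$ or $\pi_1^{bsp}(\widetilde{X},\tilde{x})$), and then apply $p$ to read off a Spanier decomposition of $p\circ\alpha$ downstairs. Since $\pi_1^{sp}(X,x)=\bigcap_{\mathcal{U}}\pi(\mathcal{U},x)$ and $\pi_1^{bsp}(X,x)=\bigcap_{\mathcal{V}}\pi^{*}(\mathcal{V},x)$, it suffices to handle one such cover at a time.

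For the unbased half, I would fix an open cover $\mathcal{U}=\{U_i\}_{i\in I}$ of $X$ and set $\widetilde{\mathcal{U}}=\{p^{-1}(U_i)\}_{i\in I}$, which is an open cover of $\widetilde{X}$ by continuity of $p$. The inclusion $\pi_1^{sp}(\widetilde{X},\tilde{x})\leq\pi(\widetilde{\mathcal{U}},\tilde{x})$ produces paths $\tilde{u}_j$ in $\widetilde{X}$ starting at $\tilde{x}$ and loops $\tilde{v}_j$ inside some $p^{-1}(U_{i_j})$ with
\[
\alpha\simeq\prod_{j=1}^{n}\tilde{u}_{j}\ast\tilde{v}_{j}\ast\tilde{u}_{j}^{-1},
\]
and composing with $p$ gives
\[
p\circ\alpha\simeq\prod_{j=1}^{n}(p\circ\tilde{u}_{j})\ast(p\circ\tilde{v}_{j})\ast(p\circ\tilde{u}_{j})^{-1},
\]
where each $p\circ\tilde{u}_j$ starts at $x$ and each $p\circ\tilde{v}_j$ is a loop in $U_{i_j}\in\mathcal{U}$. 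Hence $[p\circ\alpha]\in\pi(\mathcal{U},x)$, and intersecting over $\mathcal{U}$ delivers $[p\circ\alpha]\in\pi_1^{sp}(X,x)$.

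For the based half, given a pointed open cover $\mathcal{V}=\{(V_i,x_i)\}_{i\in I}$ of $X$, I would define
\[
\widetilde{\mathcal{V}}=\{(p^{-1}(V_i),\tilde{x}_i)\mid i\in I,\ \tilde{x}_i\in p^{-1}(\{x_i\})\},
\]
which is a cover of $\widetilde{X}$ by open neighborhood pairs, since any $\tilde{y}\in\widetilde{X}$ has $p(\tilde{y})\in V_i$ for some $i$ and hence $\tilde{y}\in p^{-1}(V_i)$. A representative of $[\alpha]\in\pi^{*}(\widetilde{\mathcal{V}},\tilde{x})$ has the form $\prod\tilde{u}_j\ast\tilde{v}_j\ast\tilde{u}_j^{-1}$ with $\tilde{u}_j$ running from $\tilde{x}$ to some lift $\tilde{x}_{i_j}\in p^{-1}(\{x_{i_j}\})$ and $\tilde{v}_j$ a loop at $\tilde{x}_{i_j}$ in $p^{-1}(V_{i_j})$; pushing forward by $p$ makes $p\circ\tilde{u}_j$ run from $x$ to the designated base point $x_{i_j}$ and $p\circ\tilde{v}_j$ a loop at $x_{i_j}$ inside $V_{i_j}$, certifying $[p\circ\alpha]\in\pi^{*}(\mathcal{V},x)$ and thus $[p\circ\alpha]\in\pi_1^{bsp}(X,x)$.

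The one place requiring genuine care is the based case: the base points of $\widetilde{\mathcal{V}}$ must be chosen as lifts of the given $x_i$, rather than arbitrary points of $p^{-1}(V_i)$, for otherwise the endpoints of the $p\circ\tilde{u}_j$ would miss the designated base points of $\mathcal{V}$ and the resulting product would violate the endpoint condition in the definition of $\pi^{*}(\mathcal{V},x)$. Once the lifted pointed cover is set up this way, both halves collapse to the observation that composition with $p$ maps a Spanier product upstairs to a Spanier product downstairs.
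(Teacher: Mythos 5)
Your proof is correct and takes essentially the same route as the paper: pull the cover on $X$ back along $p$ to a cover of $\wt{X}$, use the intersection description of the Spanier group to decompose $\al$ upstairs, and push the decomposition forward by $p$. The only (cosmetic) difference is that the paper restricts to covers by evenly covered sets and then appeals to refinement, whereas you treat an arbitrary (pointed) cover directly, with the correct choice of lifted base points in the based case.
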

\begin{proof}
Let $\U$ be a cover of $X$ by evenly covered open neighborhoods and then $\V=p^{-1}(\U)$ is an open cover of $\wt{X}$. Since $\pi_1^{bsp}(\wt{X},\ti{x})\sub\pi(\V,\ti{x})$, $[\al]\in\pi(\U,\ti{x})$ and hence there are pointed open subsets $(V_1,y_1),(V_2,y_2),...,(V_n,y_n)\in\V$, the paths $\al_i$ initiated from $\ti{x}$ and loops $\bt_i:I\lo V_i$ based at $\al_i(1)=y_i$, for $i=1,2,...,n$, such that $$\al\simeq (\al_1*\bt_1*\al_1^{-1})*(\al_2*\bt_2*\al_2^{-1})*...*(\al_n*\bt_n*\al_n^{-1}).$$
If $\la_i=p\circ\al_i$ and $\theta_i=p\circ\bt_i$, for $i=1,2,...,n$, then the $\theta_i$'s are loops in $(U_i,x_i)=p((V_i,y_i))\in\U$ based at $\la_i(1)=x_i$. Therefore $$p\circ\al\simeq (\la_1*\theta_1*\la_1^{-1})*(\la_2*\theta_2*\la_2^{-1})*...*(\la_n*\theta_n*\la_n^{-1})$$
which implies that $[p\circ\al]\in\pi(\U,x)$. Since every open cover of $X$ has a refinement by evenly covered open subset, $[p\circ\al]\in\pbs$.
\end{proof}
\begin{theorem}
(i) A covering $p:\wt{X}\lo X$ is a based Spanier covering if and only if $\pi_1^{bsp}(X,x)=p_*\pi_1(\wt{X},\ti{x})$.\\
(ii) A covering $p:\wt{X}\lo X$ is a Spanier covering if and only if $\pi_1^{sp}(X,x)=p_*\pi_1(\wt{X},\ti{x})$.
\end{theorem}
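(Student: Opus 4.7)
The plan is to handle (i) and (ii) in parallel, since the arguments differ only in replacing $\pi_1^{bsp}$ and pointed covers $\V$ by $\pi_1^{sp}$ and open covers $\U$. The forward implication is essentially immediate: if $\wt{X}$ is (based) Spanier then $\pi_1(\wt{X},\ti{x})$ coincides with its own (based) Spanier group, so Lemma~3.7 pushes every class in $\pi_1(\wt{X},\ti{x})$ into $\pbs$ (respectively $\psp$), giving $\pst\leq\pbs$ (respectively $\pst\leq\psp$); the opposite containment is exactly Theorem~3.1.

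For the backward direction I would proceed in two stages. First, I would show that the hypothesis forces every path-connected covering $q:\hat{X}\lo\wt{X}$ to be one-sheeted. Since $p\circ q:\hat{X}\lo X$ is itself a covering, applying Theorem~3.1 to the composition yields
\[
\pbs\leq (p\circ q)_*\pi_1(\hat{X},\hat{x})=p_*\bigl(q_*\pi_1(\hat{X},\hat{x})\bigr)\leq\pst=\pbs,
\]
so all the inequalities are equalities; injectivity of $p_*$ then gives $q_*\pi_1(\hat{X},\hat{x})=\pi_1(\wt{X},\ti{x})$, so $q$ is one-sheeted and $\hat{X}\cong\wt{X}$. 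The analogous chain with $\psp$ in place of $\pbs$ handles case (ii).

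Second, I would apply the classical Spanier correspondence (Theorem~1.1) to turn this covering rigidity into the desired identification. For each open cover $\U$ of $\wt{X}$, Theorem~1.1 produces a covering $q_\U:\wt{X}_\U\lo\wt{X}$ with $(q_\U)_*\pi_1(\wt{X}_\U)=\pi(\U,\ti{x})$; by the first stage this covering is trivial, so $\pi(\U,\ti{x})=\pi_1(\wt{X},\ti{x})$ for every $\U$, and intersecting over $\U$ gives $\pi_1^{sp}(\wt{X},\ti{x})=\pi_1(\wt{X},\ti{x})$. For part~(i), the coincidence of the based and unbased Spanier groups in locally path connected spaces (noted in Section~2) upgrades this to $\pi_1^{bsp}(\wt{X},\ti{x})=\pi_1(\wt{X},\ti{x})$.

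The main obstacle is the implicit reliance on local path connectedness in the second stage, since Theorem~1.1 requires this hypothesis in order to build the covering $q_\U$; this is not restrictive in the intended setting of Sections~4 and~5. A more direct lifting approach, in which a $\pi^*(\V,x)$-representation of $p\circ\ti{\al}$ in $X$ is lifted to a $\pi^*(\wt{\V},\ti{x})$-representation of $\ti{\al}$ for a prescribed cover $\wt{\V}$ of $\wt{X}$, is tempting but runs into a sheet-matching obstruction when $p$ has infinite fibers, because the endpoint of each lifted path $\ti{u}_j$ need not lie in the sheet of $V_j$ corresponding to the prescribed element of $\wt{\V}$, and no single evenly covered neighborhood in $X$ can be shrunk to accommodate all sheets simultaneously.
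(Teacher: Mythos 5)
Your forward implication is correct and coincides with the paper's: the push-down lemma (Lemma 3.9 here, applied to $\pi_1(\wt{X},\ti{x})=\pi_1^{bsp}(\wt{X},\ti{x})$) gives $\pst\leq\pbs$, and Theorem 3.1 gives the reverse containment.

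The converse is where you depart from the paper, and Stage 1 of your argument has a genuine gap: you assert that $p\circ q:\hat{X}\lo X$ ``is itself a covering'' in order to apply Theorem 3.1 to it. The composite of two covering maps need not be a covering map; this is guaranteed only under extra hypotheses (e.g.\ $q$ finite-sheeted, or $X$ semi-locally simply connected so that a classical universal cover exists), and the failure occurs precisely over bases that are not semi-locally simply connected --- the intended setting of this paper. Worse, the failure mode is exactly the sheet-matching phenomenon you describe in your closing paragraph: an evenly covered (for $p$) neighborhood $U$ may have infinitely many sheets in $\wt{X}$, and no single shrinking of $U$ need have all of its sheets evenly covered by $q$; the same issue blocks the weaker statement $\pbs\leq(p\circ q)_*\pi_1(\hat{X},\hat{x})$ that you actually need, since proving it requires every loop in some $U$ to lift to a loop of $\hat{X}$, again a condition on all sheets at once. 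So your detour through covering rigidity does not escape the obstruction you correctly identified for the direct lifting approach --- it relocates it into an unproved (and in general false) claim, and without Stage 1, Stage 2 only gives $\pi(\U,\ti{x})=\pi_1(\wt{X},\ti{x})$ for those covers $\U$ for which one-sheetedness of $q_{\U}$ can be separately established. For the record, the paper's own proof takes precisely the direct route you set aside: it lifts a $\pi^*(\U,x)$-representation of $p\circ\al$ to a representation of $\al$ subordinate to the cover of $\wt{X}$ by all sheets over an evenly covered cover $\U$ of $X$, and your cofinality worry about such covers is a fair criticism of that argument as written; but your replacement inherits the same uniformity-over-sheets difficulty rather than resolving it.
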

\begin{proof}
(i) By definition of based Spanier coverings, $\pi_1(\wt{X},\ti{x})=\pi_1^{bsp}(\wt{X},\ti{x})$. Using Lemma 3.9 and Proposition 2.2 we have $\pi_1^{bsp}(X,x)=p_*\pi_1(\wt{X},\ti{x})$. Conversely, let $[\al]\in\pi_1(\wt{X},\ti{x})$ and $\V$ be an open cover of $\wt{X}$ such that $\U=p(\V)$ be an open cover of $X$ by evenly covered open subsets of $X$. Since $p_*\pi_1(\wt{X},\ti{x})=\pi_1^{bsp}(X,x)$, $[p\circ\al]\in\pi_1^{bsp}(X,x)$ and hence there are pointed open subsets $(U_1,x_1),(U_2,x_2),...,(U_n,x_n)\in\U$, the paths $\al_i$ initiated from $x$ and loops $\bt_i:I\lo U_i$ based at $\al_i(1)=x_i$, for $i=1,2,...,n$, such that $$p\circ\al\simeq (\al_1*\bt_1*\al_1^{-1})*(\al_2*\bt_2*\al_2^{-1})*...*(\al_n*\bt_n*\al_n^{-1}).$$ Let $\wt{\al_i}$ be the lift of $\al_i$ with initial point $\ti{x}$ and $\wt{\bt_i}=(p|_{V_i})^{-1}\circ\bt_i$ be the loop with base point $\wt{\al_i}(1)$, where $V_i$ is the homeomorphic copy of $U_i$ in $\wt{X}$ which contains $\wt{\al_i}(1)$, then  $$[(\wt{\al_1}*\wt{\bt_1}*\wt{\al_1}^{-1})*...*(\wt{\al_n}*\wt{\bt_n}*\wt{\al_n}^{-1})]\in\pi(\V,\ti{x}).$$
If $\wt{\al_{i}^{-1}}$ is the lift of $\al_{i}^{-1}$ with initial point $\wt{\bt_i}(1)$, then $\wt{\al_{i}^{-1}}=\wt{\al_i}^{-1}$
 and hence
 Since $p_*$ is injective we have $$[\al]=[(\wt{\al_1}*\wt{\bt_1}*\wt{\al_1}^{-1})*...*(\wt{\al_n}*\wt{\bt_n}*\wt{\al_n}^{-1})]\in\pi(\V,\ti{x}).$$
(ii) By a similar proof to (i) the result holds.
\end{proof}
\begin{lemma}
Let $X$ have a based Spanier covering $p:\wt{X}\lo X$ and $\al$ be a loop in $X$ which has a closed lift $\wt{\al}$ in $\wt{X}$, then $[\al]\in\pbs=\psp$. Consequently, if a loop $\al$ in $X$ has a closed lift $\wt{\al}$ in $\wt{X}$, then every lift of $\al$ in $\wt{X}$ is also closed.
\end{lemma}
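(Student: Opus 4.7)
The plan is to push the closed lift down to $X$ via Lemma 3.9 and then invoke Proposition 3.8(ii), and then read off the consequence from Corollary 3.2. Since $\wt{X}$ is a based Spanier space, the equality $\pi_1(\wt{X},\ti{y})=\pi_1^{bsp}(\wt{X},\ti{y})$ holds at every basepoint $\ti{y}\in\wt{X}$. Applying this with $\ti{y}=\wt{\al}(0)=\wt{\al}(1)\in p^{-1}(\{x\})$ gives $[\wt{\al}]\in\pi_1^{bsp}(\wt{X},\ti{y})$.

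Next I would apply Lemma 3.9 to conclude $[\al]=[p\circ\wt{\al}]\in\pi_1^{bsp}(X,x)$. Lemma 3.9 is phrased using the distinguished basepoint $\ti{x}$, but its proof (lifting an evenly covered refinement and reading off the factorization $\prod u_jv_ju_j^{-1}$) is manifestly basepoint-independent, so it applies equally at $\ti{y}$. Proposition 3.8(ii) then gives $\pi_1^{bsp}(X,x)=\pi_1^{sp}(X,x)$, and therefore $[\al]\in\pbs=\psp$, which is the first assertion.

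For the consequence, once we know $[\al]\in\psp$, Corollary 3.2 directly states that every lift of $\al$ in $\wt{X}$ is a loop. Thus from the existence of a single closed lift we deduce that all lifts of $\al$ are closed.

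The only delicate point is the basepoint shift mentioned above: the closed lift $\wt{\al}$ is based at some $\ti{y}\in p^{-1}(\{x\})$ that need not equal the preferred $\ti{x}$. Beyond recording that Lemma 3.9 is insensitive to this choice, one can alternatively observe that $\pi_1^{bsp}(\wt{X},\ti{y})$ and $\pi_1^{bsp}(\wt{X},\ti{x})$ are conjugate via any path in $\wt{X}$ from $\ti{x}$ to $\ti{y}$, and that $\pi_1^{bsp}$ is a normal subgroup (its defining products $\prod u_jv_ju_j^{-1}$ are closed under conjugation by arbitrary loops based at $x$), so the based Spanier condition at one basepoint transfers to every basepoint. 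No further calculation is needed beyond these citations.
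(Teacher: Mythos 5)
Your proof is correct and follows essentially the same route as the paper: the paper simply inlines the computation of Lemma 3.9 (pull back an evenly covered cover to $\wt{X}$, factor $\wt{\al}$ using the based Spanier property of $\wt{X}$, and push the factorization down by $p$), whereas you cite Lemma 3.9 directly and then invoke Proposition 3.8(ii) and Corollary 3.2, which is exactly how the paper's argument is organized in spirit. Your extra care about the basepoint of the closed lift is a genuine (if minor) improvement, since the paper's proof tacitly assumes $\wt{\al}$ is based at the preferred $\ti{x}$, and your justification via the definition of a based Spanier space holding at every basepoint is sound.
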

\begin{proof}
 Assume that a lift $\wt{\al}$ of $\al$ is closed and $\U$ is a cover of $X$ by evenly covered open neighborhood pairs and let $\V=p^{-1}(\U)$. Since $[\wt{\al}]\in\pi_1(\wt{X},\ti{x})=\pi_1^{bsp}(\wt{X},\ti{x})\leq\pi(\V,\ti{x})$, there are $(V_1,y_1),(V_2,y_2),...,(V_n,y_n)\in\V$, the paths $\al_i$ from $\ti{x}$ to $y_i$ and loops $\bt_i:I\lo V_i$ based at $\al_i(1)=y_i$, for $i=1,2,...,n$, such that $$\wt{\al}\simeq (\al_1*\bt_1*\al_1^{-1})*(\al_2*\bt_2*\al_2^{-1})*...*(\al_n*\bt_n*\al_n^{-1}).$$
 If $(U_i,x_i)=p((V_i,y_i))$, then $(U_i,x_i)\in\U$ since $\V=p^{-1}(\U)$ and hence $$\al\simeq p\circ\left((\al_1*\bt_1*\al_1^{-1})*(\al_2*\bt_2*\al_2^{-1})*...*(\al_n*\bt_n*\al_n^{-1})\right)\in\pi(\U,x),$$
 which implies that $[\al]\in\pbs=\psp$ since $\U$ is arbitrary.
\end{proof}
\begin{theorem}
A Spanier covering of a locally path connected space $X$ is the universal object in the category $\mathcal{COV}$(X).
\end{theorem}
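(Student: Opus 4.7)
The plan is to combine Theorem 3.10(ii) with Theorem 3.1 and then invoke the classical lifting criterion, so the proof is essentially a diagram chase.

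First I would fix a Spanier covering $p:\wt{X}\lo X$ with $p(\tilde{x})=x$, and an arbitrary path connected covering $q:\wt{Y}\lo X$ with some chosen $\tilde{y}\in q^{-1}(\{x\})$. The goal is to build a covering map $r:\wt{X}\lo\wt{Y}$ with $q\circ r=p$. Because $\wt{X}$ is a Spanier space, Theorem 3.10(ii) gives $p_*\pi_1(\wt{X},\tilde{x})=\pi_1^{sp}(X,x)$; on the other hand, applying Theorem 3.1 to the covering $q$ yields $\pi_1^{sp}(X,x)\leq q_*\pi_1(\wt{Y},\tilde{y})$. Chaining these two facts,
\[
p_*\pi_1(\wt{X},\tilde{x})\ =\ \pi_1^{sp}(X,x)\ \leq\ q_*\pi_1(\wt{Y},\tilde{y}).
\]

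Next I would appeal to the classical lifting criterion. Since $X$ is locally path connected, so is $\wt{X}$ (a covering space of a locally path connected space is locally path connected), and $\wt{X}$ is path connected by our standing convention. The subgroup inclusion above is exactly the hypothesis of the classical lifting criterion applied to the map $p:(\wt{X},\tilde{x})\lo(X,x)$ and the covering $q:(\wt{Y},\tilde{y})\lo(X,x)$, so there exists a unique continuous map $r:\wt{X}\lo\wt{Y}$ with $r(\tilde{x})=\tilde{y}$ and $q\circ r=p$.

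Finally I would verify that $r$ is itself a covering map. This is standard: if $q\circ r=p$ with $p$ and $q$ coverings and $\wt{Y}$ locally path connected, then for any $\tilde{y}'\in\wt{Y}$ one picks a path connected open neighborhood $W$ of $\tilde{y}'$ small enough that $q(W)$ is evenly covered by both $p$ and $q$; the sheets of $p^{-1}(q(W))$ lying over $W$ then partition $r^{-1}(W)$ into disjoint open sets each mapped homeomorphically onto $W$ by $r$. Since $\wt{Y}$ is locally path connected (being a cover of locally path connected $X$), such $W$ exist at every point, and $r$ is a covering of $\wt{Y}$.

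The only mildly delicate step is the last one, checking that the lift $r$ is a covering and not merely continuous; the rest is essentially bookkeeping with subgroups. I do not foresee any genuine obstruction, because the Spanier-covering hypothesis was designed precisely to give equality in Theorem 3.1, and combined with the universal lower bound in Theorem 3.1 for an \emph{arbitrary} covering $q$, it forces $p$ to sit below every other connected covering of $X$.
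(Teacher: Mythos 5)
Your proof is correct and follows essentially the same route as the paper: Theorem 3.10(ii) gives $p_*\pi_1(\wt{X},\tilde{x})=\pi_1^{sp}(X,x)$, Theorem 3.1 applied to $q$ gives $\pi_1^{sp}(X,x)\leq q_*\pi_1(\wt{Y},\tilde{y})$, and the lifting criterion produces the required map. You are actually slightly more careful than the paper, which omits the (standard but worth stating) verification that the lift $r$ is itself a covering map and which cites Proposition 2.2 where Theorem 3.1 is the relevant ingredient.
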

\begin{proof}
Assume that $p:\wt{X}\lo X$ is a Spanier covering of $X$ and $q:\wt{Y}\lo X$ is another covering. By Proposition 2.2 and Theorem 3.10 $p_*\pi_1(\wt{X})=\pi_1^{sp}(X)\leq q_*\pi_1(\wt{Y})$ which implies that there exists $f:\wt{X}\lo\wt{Y}$ such that $q\circ f=p$.
\end{proof}
\section{Existence}
Suppose $\pc$ is a Spanier covering. Every point $x\in X$ has a neighborhood $U$ having a
lift $\wt{U}\sub\wt{X}$ projecting homeomorphically to $U$ by $p$. Each loop $\al$ in $U$ lifts to a loop $\wt{\al}$
in $\wt{U}$, and $[\wt{\al}]\in\pi_1^{sp}(\wt{X},\ti{x})$ since $\pi_1(\wt{X},\ti{x})=\pi_1^{sp}(\wt{X},\ti{x})$. So, composing this with Lemma 3.9, $[\al]\in\psp$. Thus the space $X$ has the following
property: \emph{Every point $x\in X$ has a neighborhood $U$ such that
$i_*\pi_1(U,x)\leq\pi_1^{sp}(X,x)$.}
\begin{definition}
 We call a space $X$ semi-locally Spanier space if and only if for each $x\in X$ there exists an open neighborhood $U$ of $x$ such that $i_*\pi_1(U,x)\leq\pi_1^{sp}(X,x)$, where $i:U\lo X$ is the inclusion map.
\end{definition}
\begin{example}
Every Spanier space is a semi-locally Spanier space. Also, the product $X\times Y$ is a semi-locally Spanier space if $X$ is a Spanier space and $Y$ is either locally simply connected or locally path connected and semi-locally simply connected.
 If $(X,x)$ is a pointed Spanier space and $(Y,y)$ is a locally nice space like the above, then one point union $X\vee Y$ is a semi-locally Spanier space.
\end{example}
The following lemma easily comes from definitions.
\begin{lemma}
Let $\U$ be an open cover of a space $X$ and $\theta$ be a path in $X$ with $\theta(0)=x_1$ and $\theta(1)=x_2$. Then $\pi(\U,x_2)=\vf_{\theta}\pi(\U,x_1)$, where $\vf_{\theta}:\pi_1(X,x_1)\lo\pi_1(X,x_2)$ is the isomorphism given by $\vf_{\theta}([\al])=[\theta^{-1}*\al*\theta]$.
\end{lemma}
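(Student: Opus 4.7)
The plan is to prove the two inclusions $\vf_{\theta}\pi(\U,x_1)\sub\pi(\U,x_2)$ and $\pi(\U,x_2)\sub\vf_{\theta}\pi(\U,x_1)$ by direct manipulation of the canonical generators of the unbased Spanier group, exploiting the fact that $\vf_\theta$ is an isomorphism with inverse $\vf_{\theta^{-1}}$.

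For the forward inclusion, I would take an arbitrary $[\al]\in\pi(\U,x_1)$ with a representative of the standard form
\[
\al\simeq \prod_{j=1}^n u_j*v_j*u_j^{-1},
\]
where each $u_j$ starts at $x_1$ and each $v_j$ is a loop based at $u_j(1)$ inside some $U_j\in\U$. Then
\[
\vf_\theta([\al])=[\theta^{-1}*\al*\theta]=\Big[\theta^{-1}*\prod_{j=1}^n(u_j*v_j*u_j^{-1})*\theta\Big].
\]
The key step is to insert trivial factors $\theta*\theta^{-1}$ between consecutive terms (each being homotopic to the constant path at $x_1$) and regroup, obtaining
\[
\vf_\theta([\al])=\Big[\prod_{j=1}^n (\theta^{-1}*u_j)*v_j*(\theta^{-1}*u_j)^{-1}\Big].
\]
Setting $u_j':=\theta^{-1}*u_j$, each $u_j'$ is a path starting at $x_2$ while the loops $v_j$ are unchanged and still lie in the same $U_j\in\U$. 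This expression therefore witnesses $\vf_\theta([\al])\in\pi(\U,x_2)$.

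For the reverse inclusion, I would apply exactly the same argument to $\theta^{-1}$ in place of $\theta$: it shows $\vf_{\theta^{-1}}\pi(\U,x_2)\sub\pi(\U,x_1)$. Since $\vf_\theta$ and $\vf_{\theta^{-1}}$ are mutually inverse isomorphisms of the fundamental groups, applying $\vf_\theta$ to both sides of this inclusion yields $\pi(\U,x_2)\sub\vf_\theta\pi(\U,x_1)$, completing the proof.

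I do not anticipate a real obstacle here; the only mildly delicate point is justifying the regrouping step, which is a standard telescoping of path products using that $\theta*\theta^{-1}$ is null-homotopic rel endpoints and that concatenation is associative up to homotopy. The argument is purely formal and uses no hypothesis on $\U$ beyond its being an open cover, and no property of $X$ beyond path connectedness (which is implicit since the path $\theta$ exists).
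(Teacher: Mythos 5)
Your proof is correct and is exactly the standard verification: the paper itself gives no argument, stating only that the lemma ``easily comes from definitions,'' and the telescoping insertion of $\theta*\theta^{-1}$ between consecutive generators, followed by the symmetric argument for $\theta^{-1}$, is precisely the computation the authors are implicitly invoking. No gaps; the regrouping step you flag is indeed routine associativity-up-to-homotopy.
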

\begin{corollary}
For a space $X$ and $x_1,x_2\in X$, the homomorphism $\phi_{\theta}:{\pi_1^{sp}(X,x_1)}\lo {\pi_1^{sp}(X,x_2)}$ defined as the same as $\vf_{\theta}$ is an isomorphism.
\end{corollary}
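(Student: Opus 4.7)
The plan is to derive the corollary directly from Lemma 4.3 by passing to the intersection over all open covers. First I would record the explicit formula for the Spanier group, namely
\[
\pi_1^{sp}(X,x_i) \;=\; \bigcap_{\mathcal{U}} \pi(\mathcal{U}, x_i),
\]
the intersection ranging over all open covers $\mathcal{U}$ of $X$, as noted in the introduction. The whole point will be that the isomorphism $\varphi_\theta:\pi_1(X,x_1)\to\pi_1(X,x_2)$ sends this intersection onto the analogous intersection based at $x_2$.

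Next I would verify that $\varphi_\theta$ restricts to a homomorphism $\pi_1^{sp}(X,x_1)\to\pi_1^{sp}(X,x_2)$. Given $[\alpha]\in\pi_1^{sp}(X,x_1)$ and any open cover $\mathcal{U}$ of $X$, we have $[\alpha]\in\pi(\mathcal{U},x_1)$, so by Lemma 4.3 applied to the path $\theta$,
\[
\varphi_\theta([\alpha]) \in \varphi_\theta\,\pi(\mathcal{U},x_1) \;=\; \pi(\mathcal{U},x_2).
\]
Since $\mathcal{U}$ is arbitrary, $\varphi_\theta([\alpha])\in\pi_1^{sp}(X,x_2)$, so $\phi_\theta$ is well-defined; it inherits the homomorphism property from $\varphi_\theta$.

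For the inverse, I would apply the same argument to the reverse path $\theta^{-1}$, which runs from $x_2$ to $x_1$. Lemma 4.3 then gives $\varphi_{\theta^{-1}}\,\pi(\mathcal{U},x_2)=\pi(\mathcal{U},x_1)$ for every open cover $\mathcal{U}$, so $\varphi_{\theta^{-1}}$ restricts to a homomorphism $\phi_{\theta^{-1}}:\pi_1^{sp}(X,x_2)\to\pi_1^{sp}(X,x_1)$. Because $\varphi_{\theta^{-1}}$ and $\varphi_\theta$ are mutually inverse on the ambient fundamental groups, their restrictions $\phi_{\theta^{-1}}$ and $\phi_\theta$ are mutually inverse as well, which shows that $\phi_\theta$ is an isomorphism.

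This proof is essentially bookkeeping on top of Lemma 4.3, so there is no genuine obstacle; the only point that needs a moment's care is noticing that Lemma 4.3 gives the equality $\varphi_\theta\,\pi(\mathcal{U},x_1)=\pi(\mathcal{U},x_2)$ (not just an inclusion) and that this equality is uniform in $\mathcal{U}$, which is precisely what lets the intersection be mapped onto the intersection.
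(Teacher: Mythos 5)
Your argument is correct and is exactly the route the paper intends: the corollary is stated without proof as an immediate consequence of Lemma 4.3, obtained by intersecting the equalities $\vf_{\theta}\,\pi(\U,x_1)=\pi(\U,x_2)$ over all open covers $\U$ and noting that $\vf_{\theta^{-1}}$ restricts to the inverse. Your observation that the lemma gives an equality uniform in $\U$ is precisely the point that makes the intersection argument work.
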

\begin{theorem}
A connected and locally path connected space $X$ has Spanier covering if and only if $X$ is a semi-locally Spanier space.
\end{theorem}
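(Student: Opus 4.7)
The plan is to prove the two implications separately. The forward direction is essentially the observation recorded just before Definition 4.1; the converse reduces to Spanier's classical Theorem 1.1 applied with $H=\psp$, together with the characterization of Spanier coverings from Theorem 3.10(ii).

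For the forward direction, suppose $p:\wt{X}\lo X$ is a Spanier covering. Fix $x\in X$, pick an evenly covered open neighborhood $U$ of $x$, and let $\wt{U}\sub\wt{X}$ be the sheet over $U$ that contains $\ti{x}$. Any loop $\al$ in $U$ based at $x$ lifts to a loop $\wt{\al}$ in $\wt{U}$ based at $\ti{x}$; since $\wt{X}$ is a Spanier space, $[\wt{\al}]\in\pi_1^{sp}(\wt{X},\ti{x})$, and Lemma 3.9 then yields $[\al]=[p\circ\wt{\al}]\in\psp$. Hence $i_*\pi_1(U,x)\leq\psp$, so $X$ is semi-locally Spanier.

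For the converse, assume $X$ is connected, locally path connected and semi-locally Spanier. The strategy is to construct an open cover $\U$ of $X$ with $\pi(\U,x)\leq\psp$; Theorem 1.1 applied with $H=\psp$ will then produce a covering $\pc$ satisfying $\pst=\psp$, which by Theorem 3.10(ii) is a Spanier covering. To build $\U$, for each $z\in X$ invoke the hypothesis to choose an open neighborhood of $z$ whose $\pi_1$-image at $z$ lies in $\pi_1^{sp}(X,z)$; by local path connectedness we may replace it with the (open) path component $W_z\ni z$, which still satisfies $i_*\pi_1(W_z,z)\leq\pi_1^{sp}(X,z)$. Set $\U=\{W_z\mid z\in X\}$.

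The crux is verifying $\pi(\U,x)\leq\psp$. A typical generator is $[u*v*u^{-1}]$, where $u$ is a path from $x$ to some $y$ and $v$ is a loop at $y$ inside some $W_z\in\U$. Using path connectedness of $W_z$, choose a path $\gamma$ in $W_z$ from $z$ to $y$; then $\gamma*v*\gamma^{-1}$ is a loop at $z$ contained in $W_z$, so the hypothesis gives $[\gamma*v*\gamma^{-1}]\in\pi_1^{sp}(X,z)$. Letting $\mu=u*\gamma^{-1}$, a path from $x$ to $z$, one has
$$[u*v*u^{-1}]=[\mu*(\gamma*v*\gamma^{-1})*\mu^{-1}],$$
and the change-of-basepoint isomorphism of Corollary 4.4 transports this element into $\psp$. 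The main obstacle is exactly this base-point mismatch: the semi-locally Spanier hypothesis controls only loops based at the distinguished point $z$ of $W_z$, whereas Spanier group generators allow loops at arbitrary $y\in W_z$. Local path connectedness is precisely what supplies the interpolating path $\gamma$, after which Corollary 4.4 transfers the conclusion back to the base $x$. Once $\pi(\U,x)\leq\psp$ is established, the appeals to Theorems 1.1 and 3.10(ii) are immediate.
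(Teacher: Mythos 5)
Your proof is correct and follows essentially the same route as the paper: necessity via the lifting observation preceding Definition 4.1 together with Lemma 3.9, and sufficiency by building a cover of path-connected neighborhoods with $i_*\pi_1(W,z)\leq\pi_1^{sp}(X,z)$, showing $\pi(\U,x)\leq\psp$ via the change-of-basepoint isomorphism (Corollary 4.4), and then invoking Theorems 1.1 and 3.10(ii). Your write-up actually fills in the conjugation/base-point-transfer step that the paper's proof leaves implicit.
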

\begin{proof}
The discussion at the beginning of the section follows necessity. For sufficiently, we show that there exists an open cover $\U$ of $X$ such that $\pi(\U,x)=\psp$ which implies the existence of a covering $\pc$ such that $\pst=\psp$, which is a Spanier covering by Theorem 3.10.

Let $\U$ be the open cover of $X$ consisting of the path connected open subsets of $X$ such that $i_*\pi_1(U,y)\leq\pi_1^{sp}(X,y)$, for $U\in\U$, where $i:U\lo X$ is the inclusion map. By definition of the Spanier group we have $\psp\leq\pi(\U,x)$. Also, by the choice of $\U$ and Corollary 4.5 we have $\pi(\U,x)\leq\psp$. Therefore $\pi(\U,x)=\psp$, as desired.
\end{proof}
Using Theorem 1.1 we have the following result.
\begin{theorem}
Suppose $X$ is a connected, locally path connected and semi-locally
 Spanier space. Then for every subgroup $H\leq\pi_1(X,x)$ containing $\pi_1^{sp}(X,x)$, there exists a covering $p:\wt{X}_H\lo X$ such that $p_*\pi_1(\wt{X}_H,\ti{x})=H$, for a suitably chosen base point
$\ti{x}\in \wt{X}_H$.
\end{theorem}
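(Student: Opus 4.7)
The plan is to reduce this immediately to Spanier's classical existence result (Theorem 1.1) by reusing the open cover constructed in the proof of Theorem 4.6. Since $H$ contains $\pi_1^{sp}(X,x)$, it suffices to exhibit an open cover $\U$ of $X$ with $\pi(\U,x)\leq \pi_1^{sp}(X,x)$, because then $\pi(\U,x)\leq H$ and Theorem 1.1, applied to the connected locally path connected space $X$, produces a covering $p:\wt{X}_H\lo X$ with $p_*\pi_1(\wt{X}_H,\ti{x})=H$ for an appropriate base point $\ti{x}\in p^{-1}(\{x\})$.

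First I would invoke the semi-locally Spanier hypothesis together with local path connectedness to build the cover. For each $y\in X$ choose an open neighborhood $U_y$ of $y$ with $i_*\pi_1(U_y,y)\leq \pi_1^{sp}(X,y)$, and then shrink $U_y$ to a path connected open subneighborhood (possible by local path connectedness); the inclusion-induced image only becomes smaller, so the Spanier containment persists. Let $\U$ be the resulting open cover of $X$ by path connected sets. This is exactly the cover used in the proof of Theorem 4.6.

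Next I would verify $\pi(\U,x)\leq \pi_1^{sp}(X,x)$. Take a generator $[u\,v\,u^{-1}]$ of $\pi(\U,x)$ with $v$ a loop in some $U\in\U$ based at a point $y=u(1)$. By construction $[v]\in i_*\pi_1(U,y)\leq \pi_1^{sp}(X,y)$, and by the change-of-basepoint isomorphism $\phi_u$ of Corollary 4.5 we get $[u\,v\,u^{-1}]=\phi_{u^{-1}}([v])\in \pi_1^{sp}(X,x)$. Since $\pi(\U,x)$ is generated by such elements, the inclusion follows. Combined with the always-valid inclusion $\pi_1^{sp}(X,x)\leq \pi(\U,x)$ from the definition of the Spanier group as an intersection, this actually yields equality, but only the inequality is needed here.

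Finally, since $\pi_1^{sp}(X,x)\leq H$, we have $\pi(\U,x)\leq H$, and Theorem 1.1 delivers the required covering. There is no substantive obstacle in this argument; the only mild care to take is the base-point bookkeeping in the change-of-basepoint step (which is handled once and for all by Lemma 4.4 and Corollary 4.5) and the observation that refining each chosen neighborhood to a path connected one preserves the Spanier containment, a routine use of local path connectedness.
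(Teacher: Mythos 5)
Your proposal is correct and is exactly the argument the paper intends: the paper gives no written proof, merely noting that the theorem follows ``using Theorem 1.1,'' with the open cover $\U$ satisfying $\pi(\U,x)=\pi_1^{sp}(X,x)\leq H$ supplied by the proof of Theorem 4.6. You have simply written out the details the authors left implicit, including the correct use of Corollary 4.5 for the base-point bookkeeping.
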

The following theorem gives a sufficient condition for a fibration with unique path lifting property to be a covering.
\begin{theorem}
Let $\pc$ be a fibration with unique path lifting property, where $X$ is connected and locally path connected. Then $\pc$ is a covering if $X$ is a semi-locally Spanier space.
\end{theorem}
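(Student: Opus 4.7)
The strategy is to show that each point $x_0 \in X$ admits an evenly covered open neighborhood with respect to $p$, thereby exhibiting $p$ as a covering. By the semi-locally Spanier assumption combined with local path connectedness of $X$, for each $x_0 \in X$ I pick a path-connected open neighborhood $U$ of $x_0$ with $i_*\pi_1(U,x_0) \leq \pi_1^{sp}(X,x_0)$.

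The key intermediate step is to show that $\pi_1^{sp}(X,x_0) \leq p_*\pi_1(\wt{X},\ti{x}_0)$ for every $\ti{x}_0 \in p^{-1}(x_0)$; this is the fibration analogue of Theorem 3.1. Since $X$ is semi-locally Spanier, Theorem 4.4 provides a Spanier covering $q:\wt{Y}\lo X$ with $q_*\pi_1(\wt{Y}) = \pi_1^{sp}(X,x_0)$, and I take an open cover $\V$ of $X$ by path-connected sets evenly covered by $q$. For $[\al] \in \pi_1^{sp}(X,x_0) \leq \pi(\V,x_0)$, write $\al \simeq \prod_j u_j v_j u_j^{-1}$ with each $v_j$ a loop inside some $V_j \in \V$. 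By the homotopy lifting property of the fibration $p$, the endpoint of the lift of $\al$ at $\ti{x}_0$ equals the endpoint of the lift of $\prod_j u_j v_j u_j^{-1}$, and I would argue factor-by-factor that this endpoint is $\ti{x}_0$ by exploiting how unique path lifting in $p$ interacts with the $q$-even-cover structure of each $V_j$.

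Granted this inclusion, the combined inequality $i_*\pi_1(U,x_0) \leq p_*\pi_1(\wt{X},\ti{x}_0)$ holds for each $\ti{x}_0 \in p^{-1}(x_0)$. For each such $\ti{x}_0$, I define $s_{\ti{x}_0}:U \lo \wt{X}$ by path lifting: for $y \in U$, choose any path $\gamma$ in $U$ from $x_0$ to $y$, lift $\gamma$ via $p$ starting at $\ti{x}_0$, and set $s_{\ti{x}_0}(y)$ equal to its endpoint. Well-definedness follows from the above inclusion (two paths in $U$ from $x_0$ to $y$ differ by a loop in $U$, whose lift is now forced to be a loop), while continuity of $s_{\ti{x}_0}$ follows from local path connectedness of $X$ together with the continuous dependence of path lifts on their projections in a fibration. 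I would then verify that $\{s_{\ti{x}_0}(U) : \ti{x}_0 \in p^{-1}(x_0)\}$ is a pairwise disjoint family of open subsets whose union equals $p^{-1}(U)$, with $p$ restricting to a homeomorphism on each, establishing that $U$ is evenly covered.

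The main obstacle is the key intermediate inclusion $\pi_1^{sp}(X,x_0) \leq p_*\pi_1(\wt{X},\ti{x}_0)$: the proof of Theorem 3.1 relied on the local-homeomorphism structure of a covering, which is not available for a general fibration. Circumventing this requires carefully exploiting the auxiliary Spanier covering $q$ together with the homotopy lifting and unique path lifting properties of $p$ to reduce the question to lifting loops that already sit inside $q$-evenly-covered sets, which is where I expect the subtlest part of the argument to live.
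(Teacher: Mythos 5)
Your overall architecture is not wrong, but it is much heavier than the paper's: the paper disposes of your entire second half (the sections $s_{\ti{x}_0}$, their openness, disjointness, and surjectivity onto $p^{-1}(U)$) in one stroke by citing Spanier's Theorem 2.5.12, which states that a fibration with unique path lifting over a connected, locally path connected space is a covering if and only if there is an open cover $\U$ of $X$ with $\pi(\U,x)\sub p_*\pi_1(\wt{X},\ti{x})$; it then takes $\U$ to be the cover from the proof of the existence theorem (path connected open sets $U$ with $i_*\pi_1(U,y)\leq\pi_1^{sp}(X,y)$), for which $\pi(\U,x)=\psp$. Reproving that machinery by hand is legitimate, but it is not where the substance of the theorem lies.

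The genuine gap is in your ``key intermediate step'' $\pi_1^{sp}(X,x_0)\leq p_*\pi_1(\wt{X},\ti{x}_0)$, and the mechanism you propose for it cannot work. The auxiliary Spanier covering $q:\wt{Y}\lo X$ has no a priori relation to the fibration $p$: the fact that $V_j$ is evenly covered \emph{by $q$} gives no information about how a loop $v_j$ in $V_j$ lifts \emph{through $p$}. For your factor-by-factor argument to close up you need the $p$-lift of each $v_j$, started at the terminal point of the lift of $u_j$, to be a closed path; for a covering this follows because $v_j$ sits inside a $p$-evenly covered set (that is exactly how Theorem 3.1 is proved), whereas for a fibration with unique path lifting the only loops guaranteed to lift closed are the null-homotopic ones, and a loop in a $q$-evenly covered set need not be null-homotopic. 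Attempting to repair this by producing a map $\wt{Y}\lo\wt{X}$ over $X$ via the lifting criterion for fibrations is circular, since that criterion requires precisely the inclusion $q_*\pi_1(\wt{Y})=\psp\leq p_*\pi_1(\wt{X},\ti{x}_0)$ that you are trying to establish. Moreover, decomposing $[\al]\in\psp$ inside $\pi(\V,x_0)$ for a $q$-evenly covered cover $\V$ loses information: the individual factors $[u_jv_ju_j^{-1}]$ lie only in $\pi(\V,x_0)$, not in $\psp$, so no per-factor smallness is available. The natural cover to decompose over is the semi-locally Spanier one, where $[v_j]$ lands in $\pi_1^{sp}(X,\cdot)$ --- but then the assertion that such a $v_j$ has a closed $p$-lift is again the statement $\psp\leq p_*\pi_1(\wt{X},\ti{x})$ itself. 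This inclusion is exactly the hypothesis that must be verified before Spanier's Theorem 2.5.12 can be applied (the paper's own two-line proof leaves this verification implicit as well), and your proposal does not supply an argument for it.
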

\begin{proof}
By \cite[Theorem 2.5.12]{S} $\pc$ is a covering if and only if there exists an open cover $\U$ of $X$ such that $\pi(\U,x)\sub p_*\pi_1(\wt{X},\ti{x})$. Hence it suffices to let $\U$ as be chosen as in the proof of Theorem 4.5.
\end{proof}
\section{The topology of Spanier subgroups}
In this section, for a space $X$ and $x\in X$, by $\pi_1^{qtop}(X,x)$ we mean the notorious topological fundamental group endowed with the quotient topology inherited from loop space under natural map $\Omega(X,x)\lo\pi_1(X,x)$ that make it a \emph{quasi-topological group}. A \emph{quasi-topological group} $G$ is a group with a topology such that inversion $g\lo g^{-1}$ and all translations are continuous. For more details, see \cite{B, Br, Cal}. Also, $\pi_1^{\tau}(X,x)$ is the fundamental group endowed with another topology introduced by Brazas \cite{Br2}. In fact, the functor $\pi_1^{\tau}$ removes
the smallest number of open sets from the topology of $\pt$ so that make it a topological group. Here, by topological fundamental group we mean $\pi_1^{\tau}(X,x)$.

By \cite[Theorem 5.5]{B}, the connected coverings of a connected and locally path connected space $X$ are classified by conjugacy classes of open subgroups of $\pi_1^{qtop}(X,x)$ and since $\pi_1^{\tau}(X,x)$ and $\pi_1^{qtop}(X,x)$ have the same open subgroups \cite[Corollary 3.9]{Br2} we have the same result for open subgroups of $\pi_1^{\tau}(X,x)$. Using this fact and Theorem 1.1, for every open cover $\U$ of $X$, $\pi(\U,x)$ is an open subgroup of $\pi_1^{\tau}(X,x)$ and since $\pi_1^{\tau}(X,x)$ is a topological group, $\pi(\U,x)$ is a closed subgroup which implies that $\psp$ is a closed subgroup of $\pi_1^{\tau}(X,x)$. Hence we have the following proposition.
\begin{proposition}
For a connected and locally path connected space $X$, $\psp$ is a closed subgroup of $\pi_1^{\tau}(X,x)$, for every $x\in X$.
\end{proposition}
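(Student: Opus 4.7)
The plan is to realize $\psp$ as an intersection of closed subgroups of $\pi_1^{\tau}(X,x)$. The identity
\[
\psp \;=\; \bigcap_{\U}\,\pi(\U,x),
\]
where $\U$ ranges over all open covers of $X$, is already recorded in Section~1, so once each $\pi(\U,x)$ is shown to be closed, the conclusion follows because arbitrary intersections of closed sets are closed.

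To get closedness of each $\pi(\U,x)$, I would first argue that it is \emph{open} in $\pi_1^{\tau}(X,x)$, and then use the standard fact that every open subgroup of a topological group is also closed (its complement decomposes as a union of open cosets). To establish openness, I would invoke Theorem~1.1 (Spanier's classification): since $X$ is connected and locally path connected, the cover $\U$ produces a connected covering $p:\wt{X}\lo X$ with $p_*\pi_1(\wt{X},\ti{x})=\pi(\U,x)$. By \cite[Theorem~5.5]{B}, connected coverings of such an $X$ are classified by conjugacy classes of open subgroups of $\pi_1^{qtop}(X,x)$, so $\pi(\U,x)$ is open in $\pi_1^{qtop}(X,x)$. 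Finally, by \cite[Corollary~3.9]{Br2}, $\pi_1^{qtop}(X,x)$ and $\pi_1^{\tau}(X,x)$ have the same open subgroups, so $\pi(\U,x)$ is open, and hence closed, in $\pi_1^{\tau}(X,x)$.

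Putting these pieces together, $\psp$ is an intersection of closed subgroups of the topological group $\pi_1^{\tau}(X,x)$, so it is closed, which is exactly the desired statement. No step is genuinely hard here: the argument is essentially a bookkeeping assembly of three external ingredients (Theorem~1.1, the covering classification of \cite{B}, and the comparison of $\pi_1^{\tau}$ with $\pi_1^{qtop}$ from \cite{Br2}), together with the elementary ``open $\Rightarrow$ closed'' fact for subgroups of a topological group. The only place one has to be slightly careful is the appeal to Theorem~1.1: it guarantees a covering whose image subgroup \emph{equals} $\pi(\U,x)$ (not merely contains it), which is what is needed to identify $\pi(\U,x)$ itself with an open subgroup in the classification. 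That correspondence, rather than any topological subtlety, is the one point I would verify explicitly before declaring the proof complete.
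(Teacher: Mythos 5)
Your proposal is correct and follows essentially the same route as the paper: Theorem~1.1 realizes $\pi(\U,x)$ as the image subgroup of a connected covering, the classification in \cite[Theorem~5.5]{B} together with \cite[Corollary~3.9]{Br2} makes it an open (hence closed) subgroup of $\pi_1^{\tau}(X,x)$, and $\psp$ is then closed as the intersection $\bigcap_{\U}\pi(\U,x)$. Your explicit caution about needing equality $p_*\pi_1(\wt{X},\ti{x})=\pi(\U,x)$ in Theorem~1.1 is well placed but is indeed what that theorem provides.
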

 Using the above proposition the Spanier group of connected and locally path connected spaces contains the closure of the trivial element of the topological fundamental group. Hence we have the following corollary.
\begin{corollary}
Let $X$ be a connected and locally path connected space and $x\in X$. If $\pi_1^{\tau}(X,x)$ has indiscrete topology, then $X$ is a Spanier space.
\end{corollary}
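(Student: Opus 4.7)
The plan is very short, since Proposition 5.1 does essentially all the work. I would simply combine that proposition with the definition of the indiscrete topology.

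First, I would invoke Proposition 5.1 to conclude that $\pi_1^{sp}(X,x)$ is a closed subgroup of $\pi_1^{\tau}(X,x)$. Second, I would observe that if $\pi_1^{\tau}(X,x)$ carries the indiscrete topology, then its only closed subsets are the empty set and the whole group $\pi_1(X,x)$. Since $\pi_1^{sp}(X,x)$ is a subgroup it contains the identity, so it is nonempty; hence it must coincide with $\pi_1(X,x)$. By the definition from Section~2, this means $X$ is a Spanier space.

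The only subtlety worth flagging is to be sure that Proposition 5.1 really applies under the hypothesis, which it does since $X$ is assumed connected and locally path connected. There is no genuine obstacle here: the entire content of the corollary is the contrast between ``closed'' (from 5.1) and ``only two closed sets'' (from indiscreteness), and no further calculation is required.
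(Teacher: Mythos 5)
Your proof is correct and follows essentially the same route as the paper: the paper likewise derives the corollary from Proposition 5.1 by noting that the closed subgroup $\pi_1^{sp}(X,x)$ must contain the closure of the trivial element, which in the indiscrete topology is all of $\pi_1(X,x)$. Your phrasing via ``the only nonempty closed set is the whole group'' is an equivalent formulation of the same observation, so nothing further is needed.
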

\begin{corollary}
Let $X$ be a connected and locally path connected space and $x\in X$. If $\psp=1$, then $\pi_1^{\tau}(X,x)$ has $T_1$ topology.
\end{corollary}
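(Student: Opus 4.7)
The plan is to deduce this corollary essentially directly from Proposition 5.1 together with the standard fact that a topological group is $T_1$ as soon as the singleton $\{e\}$ is closed. So the proof is almost a one-liner; the work is really done already by Proposition 5.1.

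First I would invoke Proposition 5.1 to conclude that $\pi_1^{sp}(X,x)$ is a closed subgroup of $\pi_1^{\tau}(X,x)$. Under the hypothesis $\pi_1^{sp}(X,x)=1$, this says precisely that the singleton $\{e\}$ consisting of the identity of $\pi_1^{\tau}(X,x)$ is closed.

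Next I would use that $\pi_1^{\tau}(X,x)$ is, by its very construction (as recalled in the introduction to Section 5, following Brazas), a genuine topological group — not merely a quasi-topological group. In a topological group, left translation by any element $g$ is a homeomorphism, so the image $g\cdot\{e\}=\{g\}$ of the closed set $\{e\}$ is again closed. Hence every singleton in $\pi_1^{\tau}(X,x)$ is closed, which is exactly the $T_1$ separation axiom.

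The argument has no real obstacle: the content is entirely in Proposition 5.1 (which in turn rests on the open-subgroup classification of coverings from \cite{B, Br2} and Theorem 1.1), while the passage from ``$\{e\}$ closed'' to ``$T_1$'' is a standard textbook fact about topological groups. The only subtle point worth flagging is that one really does need the topology $\pi_1^{\tau}$ rather than $\pi_1^{qtop}$, because translations need not be continuous in a quasi-topological group that is not a topological group, so the step ``$\{e\}$ closed implies every singleton closed'' could fail for $\pi_1^{qtop}(X,x)$.
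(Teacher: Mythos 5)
Your proof is correct and is exactly the argument the paper intends (the corollary is stated without proof, as an immediate consequence of Proposition 5.1 plus the standard fact that a topological group with closed identity is $T_1$). One small quibble with your closing remark: under the paper's definition a quasi-topological group already has all translations continuous (hence homeomorphisms), so the passage from ``$\{e\}$ closed'' to ``$T_1$'' would work for $\pi_1^{qtop}(X,x)$ as well — and indeed the paper's Remark 5.11 notes the results transfer to $\pi_1^{qtop}$ since its topology is finer; this does not affect the validity of your proof.
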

In the following example we show that the locally path connectedness is a necessary condition for closeness of $\psp$.
\begin{example}
Let the space $Z\sub \mathbb{R}^3$ be consist of a rotated topologists' sine curve (as shown in Figure.2), the "outer cylinder"
at radius 1, where this surfaces tends to, and horizontal segment $\la$ from $(0,0,0)$ to $(1,0,0)$ that attaches them. The segment $\la$ intersects the inner portion at points $(\fr{1}{n},0,0)$. Denote by $\la_n:I\lo Z$ the line segment from $(0,0,0)$ to $(\fr{1}{n},0,0)$, $\al_n:I\lo Z$ the simple loop with radius $\fr{1}{n}$, for $n\in\N$, and $\al$ the simple loop with radius 1. Obviously, $\la_n*\al_n*\la_n^{-1}\rightarrow\la*\al*\la^{-1}$ in uniform topology which is equivalent to compact open topology in metric space $Z$. Since $[\la_n*\al_n*\la_n^{-1}]= 1$ for all $n\in\N$, $[\la*\al*\la^{-1}]\in \ov{\{1\}}$, but $[\la*\al*\la^{-1}]\neq1$. Also, if $\U$ is an open cover of $Z$ such that for every $U\in\U$, $diam(U)<1$, then $\pi(\U,0)=1$ which implies that $\pi_1^{sp}(Z,0)=1$.
\end{example}
\begin{figure}
\center
 \includegraphics[scale=0.3]{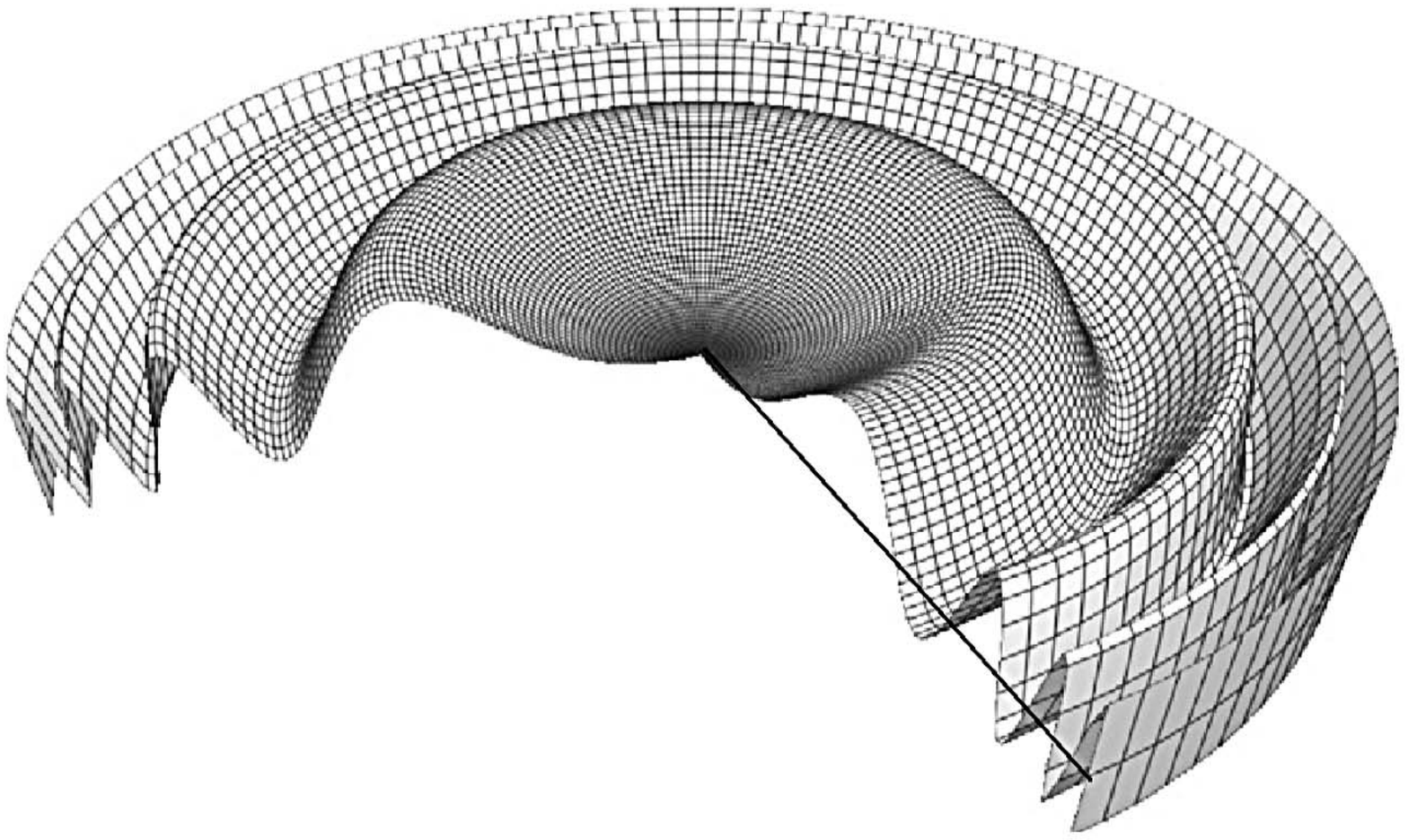}
  \caption{Space $Z$}\label{1}
\end{figure}
The authors \cite{T1} proved that for a first countable, simply connected and locally path connected space $X$, if $A\sub X$ is a closed path connected subset, then the quotient space $X/A$ has indiscrete topological fundamental group. Therefore by Corollary 5.2 we have the following theorem that gives a family of Spanier spaces.
\begin{theorem}
For a first countable, simply connected and locally path connected space $X$, if $A\sub X$ is a closed path connected subset, then the quotient space $X/A$ is a Spanier space.
\end{theorem}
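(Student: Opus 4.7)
The plan is to combine the two main ingredients already in hand: the result from \cite{T1} about indiscrete topological fundamental groups of quotients, together with Corollary 5.2 which relates an indiscrete $\pi_1^{\tau}$ to being a Spanier space. The proof will therefore be essentially an application of existing tools rather than a new construction, so the main work is just to verify that the quotient space $X/A$ fits the hypotheses of both results.

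First I would check that $X/A$ is connected and locally path connected, since these are the standing hypotheses of Corollary 5.2. Since $X$ is simply connected, it is in particular path connected, and the quotient map $q:X\lo X/A$ is continuous and surjective, so $X/A$ is connected (in fact path connected). Local path connectedness is preserved under quotient maps: given a point $[y]\in X/A$ and an open neighborhood $W$ of $[y]$, the preimage $q^{-1}(W)$ is open in the locally path connected space $X$, so every point of $q^{-1}(W)$ has a path connected open neighborhood inside $q^{-1}(W)$; projecting these via $q$ and using saturation, one shows that the path component of $[y]$ in $W$ is open. Hence $X/A$ is locally path connected.

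Next I would invoke the result from \cite{T1}, cited immediately before the theorem statement, which says that under the assumptions that $X$ is first countable, simply connected and locally path connected, and $A\sub X$ is closed and path connected, the topological fundamental group $\pi_1^{\tau}(X/A,*)$ carries the indiscrete topology. Since all four hypotheses are precisely the hypotheses of our theorem, this applies directly to our $X/A$.

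Finally, since $X/A$ is connected and locally path connected and $\pi_1^{\tau}(X/A,*)$ is indiscrete, Corollary 5.2 yields that $X/A$ is a Spanier space, completing the proof. The only potential obstacle I see is the verification that $X/A$ is locally path connected, which is a standard but not entirely automatic fact; everything else is a direct citation.
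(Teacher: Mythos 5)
Your proposal is correct and follows exactly the paper's route: cite the result of \cite{T1} to get that $\pi_1^{\tau}(X/A)$ is indiscrete, note that $X/A$ is connected and locally path connected (as the paper does in Remark 5.6), and apply Corollary 5.2. Your explicit verification of local path connectedness of the quotient is a welcome addition, but the argument is the same.
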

\begin{remark}
Note that by assumptions of the above theorem, the quotient space $X/A$ is locally path connected and so based and unbased Spanier groups coincide.
\end{remark}
We recall that a space $X$ is called shape injective, if the natural homomorphism $\vf:\pi_1(X,x)\lo\check{\pi}_1(X,x)$ is injective, where $\check{\pi}_1(X,x)$ is the first shape group of $(X,x)$ (see \cite[Section 3]{FZ} for further details).
In \cite{Br2}, it is proved that topological fundamental groups of shape injective spaces are Hausdorff. Since the spaces with trivial Spanier group are not necessarily shape injective (see \cite[Step 18, Section 6]{R}), triviality of Spanier groups can not certify the Hausdorffness of topological fundamental groups in general. In the following, we provide conditions that guarantee this.

We recall that an open cover $\U$ of $X$ is normal if it admits a partition of unity subordinated to $\U$. Also, every open cover of a paracompact space is normal (see \cite{MS}).
\begin{definition}(\cite{Lub})
A space $X$ is small loop homotopically Hausdorff if for each $x\in X$
and each loop $\al$ based at $x$, if for each normal open cover $\U$ of $X$, $[\al]\in\pi(\U, x)$, then $[\al]=1$.
\end{definition}
\begin{proposition}
Suppose $X$ is a connected, locally path connected and paracompact space. Then $\psp=1$ if and only if $X$ is shape injective.
\end{proposition}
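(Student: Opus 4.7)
The plan is to identify $\psp$ with the kernel of the natural homomorphism $\vf:\pi_1(X,x)\lo\check{\pi}_1(X,x)$; once this identification is in hand, shape injectivity (i.e.\ injectivity of $\vf$) becomes visibly equivalent to $\psp=1$.

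\textbf{Step 1 (reduction to normal covers).} Paracompactness of $X$ ensures that every open cover of $X$ admits a subordinated partition of unity, so every open cover is normal. Hence the directed family of normal open covers coincides (up to refinement) with the family of all open covers of $X$, and consequently
\[\psp=\bigcap_{\U}\pi(\U,x)=\bigcap_{\U\text{ normal}}\pi(\U,x).\]

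\textbf{Step 2 (identification of the intersection with $\ker\vf$).} For each normal cover $\U$ of $X$, a subordinated partition of unity yields a canonical homotopy class of maps $\kappa_{\U}:X\lo|N(\U)|$ into the geometric realization of the nerve, and these assemble into an HPol-expansion of $X$ which presents $\check{\pi}_1(X,x)$ as the inverse limit of the groups $\pi_1(|N(\U)|,[x])$ along refinement. Using local path-connectedness I would verify that the kernel of each $(\kappa_{\U})_{*}$ coincides with $\pi(\U,x)$: a loop null-homotopic in $|N(\U)|$ can be tracked back through $\kappa_{\U}$ and written, up to homotopy in $X$, as a product of conjugates $u_jv_ju_j^{-1}$ with each $v_j$ supported in a member of $\U$; conversely every such product is killed by $(\kappa_{\U})_{*}$ because $v_j$ lies in the star of a vertex. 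Passing to the inverse limit yields $\ker\vf=\bigcap_{\U\text{ normal}}\pi(\U,x)$, which by Step~1 equals $\psp$.

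\textbf{Step 3 (conclusion).} Shape injectivity is by definition the statement $\ker\vf=1$; by Step~2 this is equivalent to $\psp=1$, which is the desired equivalence.

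The main obstacle is Step~2. The reduction in Step~1 is essentially a tautology once paracompactness is on the table, and Step~3 is formal. The substantive work is realizing the Spanier group $\pi(\U,x)$ as the kernel of the canonical nerve map; this is standard in the locally path-connected paracompact setting (compare the nerve-expansion arguments in shape theory and the discussion in \cite{FZ}), but it is where the local structure of $X$ is genuinely used and where the careful bookkeeping between simplicial homotopies in $N(\U)$ and products of conjugates in $X$ must be performed.
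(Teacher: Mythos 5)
Your overall strategy coincides with the paper's: use paracompactness to observe that every open cover is normal, so that $\psp=\bigcap_{\U\ \mathrm{normal}}\pi(\U,x)$, and then identify triviality of this intersection with shape injectivity. The paper, however, obtains the second half entirely by citation: triviality of $\bigcap_{\U\ \mathrm{normal}}\pi(\U,x)$ is by definition the ``small loop homotopically Hausdorff'' property of \cite{Lub}, and Proposition 2.4 of \cite{Lub} states that this property is equivalent to shape injectivity. Your Steps 1 and 3 are exactly the paper's reduction; your Step 2 is an attempt to reprove the cited result from scratch, and that is where the proposal has a genuine gap.

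The gap is the claim that for each (normal) cover $\U$ the kernel of the induced map $(\kappa_{\U})_*:\pi_1(X,x)\lo\pi_1(|N(\U)|,[x])$ equals $\pi(\U,x)$. The inclusion $\pi(\U,x)\leq\ker(\kappa_{\U})_*$ is indeed routine (each $v_j$ maps into the star of a vertex). The reverse inclusion for a \emph{fixed} cover is not established by your sketch and is in fact delicate: when one translates a simplicial null-homotopy in $|N(\U)|$ back into $X$, the pieces one obtains are not loops lying in single members of $\U$ but concatenations of paths lying in overlapping members, so what one identifies as the kernel is a ``thick'' Spanier group that contains, and in general is not known to equal, $\pi(\U,x)$. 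The correct way to close the argument is to intersect over all covers and use star-refinements: if $\V$ star-refines $\U$, the thick Spanier group of $\V$ lands inside $\pi(\U,x)$, so the intersection of the kernels over all normal covers still equals $\bigcap_{\U}\pi(\U,x)=\psp$, which gives $\ker\vf=\psp$ and hence the proposition. As written, your Step 2 asserts the cover-by-cover equality without proof and without the star-refinement device, so either supply that argument or do as the paper does and invoke Proposition 2.4 of \cite{Lub}.
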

\begin{proof}
Since every open cover of a paracompact space is normal, $\psp=1$ is equivalent to small loop homotopically Hausdorffness of $X$. Hence the result follows from
 Proposition 2.4 of \cite{Lub} which states that shape injectivity and small loop homotopically Hausdorffness are equivalent.
\end{proof}
\begin{corollary}(A criteria for Hausdorffness of $\pi_1^{\tau}(X,x)$)\\
Let $X$ be a connected, locally path connected and paracompact space. If $\psp=1$, then $\pi_1^{\tau}(X,x)$ is Hausdorff.
\end{corollary}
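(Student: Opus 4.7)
The plan is to deduce this corollary by chaining together two results that are already available in the paper: Proposition~5.7 just above, which converts the hypothesis $\pi_1^{sp}(X,x)=1$ into shape injectivity under the stated local/paracompactness assumptions, and the theorem of Brazas cited in the paragraph preceding Definition~5.6, which states that the topological fundamental group $\pi_1^{\tau}(Y,y)$ of a shape injective space $Y$ is Hausdorff. Applying these two facts in succession immediately yields the conclusion.

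More concretely, the first step would be to note that $X$ is connected, locally path connected and paracompact, and by assumption $\pi_1^{sp}(X,x)=1$. These are exactly the hypotheses of Proposition~5.7, and so that proposition gives that the natural homomorphism $\varphi:\pi_1(X,x)\lo\check{\pi}_1(X,x)$ is injective; in other words, $X$ is shape injective in the sense recalled just before Definition~5.6. The second step is then to invoke Brazas's result \cite{Br2} with $Y=X$: since $X$ is shape injective, $\pi_1^{\tau}(X,x)$ is Hausdorff. This completes the argument.

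I do not anticipate any real obstacle here, since the corollary is essentially a packaging of two already-established statements into the Hausdorffness criterion advertised in the abstract and introduction. The only things that need checking are bookkeeping: that the hypotheses of Proposition~5.7 match the hypotheses given in the corollary (they do, verbatim), and that the cited form of Brazas's theorem is directly applicable to $X$ (it is, as no further hypothesis beyond shape injectivity is needed). The value of the corollary is conceptual rather than technical: it isolates the condition $\pi_1^{sp}(X,x)=1$, which is a purely Spanier-group hypothesis and in particular does not presuppose shape injectivity, as a clean and checkable sufficient condition for the topological fundamental group to be Hausdorff in the paracompact, locally path connected setting.
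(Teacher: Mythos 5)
Your proof is correct and is exactly the argument the paper intends: the corollary is stated without proof precisely because it follows by chaining Proposition~5.7 (which converts $\psp=1$ under connectedness, local path connectedness and paracompactness into shape injectivity) with the result of Brazas cited just before Definition~5.6 that shape injective spaces have Hausdorff topological fundamental group. No gaps.
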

Note that since $\pi_1^{\tau}(X,x)$ is a topological group, by assumption of the above corollary $\pi_1^{\tau}(X,x)$ is regular.
Since $\pi_1^{sp}(HE)=1$ (Proposition 3.6 of \cite{R}) and metrizable spaces are paracompact, the topological fundamental group of the Hawaiian earing, $\pi_1^{\tau}(HE,0)$, is Hausdorff.
Moreover, since every planar space $X$ is shape injective \cite[Theorem 2]{FZ}, $\pi_1^{\tau}(X,x)$ is Hausdorff.
\begin{remark}
Note that since the topology of $\pi_1^{qtop}(X,x)$ is finer than the topology of $\pi_1^{\tau}(X,x)$, all the above results hold for $\pi_1^{qtop}(X,x)$.
\end{remark}
\subsection*{Acknowledgements}
The authors would like to thank the referee for the valuable comments that help improve the manuscript.












\end{document}